\def\E{\mathbb{E}}
\def\poiss{{\rm Poiss}}
\def\E{\mathbb{E}}
\def\R{\mathbb{R}}
\def\eps{\epsilon}
\def\del{\delta}
\def\gam{\gamma}
\def\1{\mathbf{1}}
\def\lam {\lambda}
\def\tce{t_c + \eps}
\def\tce2{t_c + \frac{\eps}{2}}
\def\ER{Erd\H{o}s-R\'{e}nyi }
\def\unsat{\mathrm{UNSAT}}
\newtheorem*{theorem*}{Theorem}
\newtheorem{theorem}{Theorem}
\newtheorem{prop}{Proposition}
\newtheorem*{prop*}{Proposition}
\newtheorem{conj}{Conjecture}
\newtheorem{claim}{Claim}
\def\al {\alpha}
\begin{document}
\title{On Sharp Thresholds in Random Geometric Graphs}
\author{Milan Bradonji\'c}\thanks{Bell Labs, Alcatel-Lucent, 600 Mountain Avenue 2C318, Murray Hill, New Jersey 07974, USA, E-mail: milan@research.bell-labs.com.} 
\author{ 
Will Perkins}\thanks{School of Mathematics, Georgia Tech, 686 Cherry St, Atlanta, GA 30332, E-mail: perkins@math.gatech.edu. Supported in part by an NSF Postdoctoral Fellowship.}

\maketitle
\begin{abstract}


We give a characterization of vertex-monotone properties with sharp thresholds in a Poisson random geometric graph or hypergraph.  As an application we show that a geometric model of random $k$-SAT exhibits a sharp threshold for satisfiability.

\end{abstract}

\section{Introduction}

A property $H$ of a discrete random structure is said to exhibit a \textit{sharp threshold} with respect to a parameter $p$ if there exists a $p_c=p_c(n)$ so that for every $\eps >0$,  for $p> (1+\eps) p_c$, $H$ holds with probability $1-o(1)$ and for $p< (1-\eps) p_c$, $H$ holds with probability $o(1)$.  The classic sharp thresholds in the \ER random graph $G(n,p)$ are the threshold for connectivity at $p_c= \log n /n$ and the threshold for a giant component at $p_c = 1/n$, see~\cite{alon2004probabilistic}.  A property that does not exhibit a sharp threshold is that of containing a triangle: for any $c \in (0,\infty)$, when $p=c/n$ the probability that $G(n,p)$ contains a triangle is strictly bounded away from $0$ and $1$.  

In addition to much investigation of the threshold location and behavior of specific properties of random graphs, there have been a series of papers proving general threshold theorems.  The first such result was by Bollob{\'a}s and Thomason \cite{bollobas1987threshold} showing that any monotone property $A$ (a property closed under adding additional edges) has a threshold function: a $p^*(n)$ so that for $p \gg p^*$, $G(n,p)$ has property $A$ with probability tending to $1$, and for $p \ll p^*$, $G(n,p)$ has property $A$ with probability tending to $0$.  Subsequently, Friedgut and Kalai \cite{friedgut1996every} showed that every monotone property has a threshold width bounded by $O(\log^{-1} n)$: for any $\eps >0$, if $G(n,p)$ has property $A$ with probability $\eps$, then $G(n, p+C(\eps)/\log n)$ has property $A$ with probability at least $1- \eps$. Bourgain and Kalai \cite{bourgain1998threshold} improved this upper bound to $O(\log^{\del -2} n)$ for any $\del >0$.  Nevertheless, these theorems do not imply a sharp threshold in the sense defined above unless the critical probability for the property is sufficiently high.

Friedgut \cite{friedgut1999sharp} gave a characterization of all monotones properties of random graphs that exhibit a sharp threshold: essentially they are properties that cannot be approximated by the property of containing a subgraph from a list of constant-size subgraphs. In other words, properties with coarse thresholds are all similar to the property of containing a triangle.  Friedgut used his general theorem to prove that the satisfiability of a random $k$-SAT formula exhibits a sharp threshold, and then Achlioptas and Friedgut \cite{achlioptas1999sharp} used it to prove that the property of being $k$-colorable has a sharp threshold in $G(n,p)$. These properties had resisted previous analysis in part because of their complexity: determining the satisfiability of a $k$-SAT formula or the $k$-colorability of a graph are both NP-hard problems.  In contrast, $2$-SAT has a polynomial-time algorithm in the worst-case, and the threshold location \cite{chvatal1992mick} and width \cite{bollobas2001scaling} of a random $2$-SAT formula are both well understood. 


In this paper we prove a general sharp threshold theorem for the \emph{Random Geometric Graph} (RGG). The typical model of a RGG involves placing $n$ points uniformly at random (or according to a Poisson process of intensity $n$) in $[0,1]^d$ and joining any two points at distance less than $r(n)$ by an edge.   Unlike the edges in $G(n,p)$, the edges in the RGG are not independent. The RGG exhibits thresholds for some of the same properties as the \ER random graph. There is a unique giant component whose appearance occurs sharply at the threshold radius $r_c  = \lambda_c n^{-1/d}$~\cite{penrose:book}. The exact value of the constant $\lambda_c$ is not known, but numerical simulations for $d=2$ indicate $\lambda_c \approx 1.44$~\cite{RintoulTorquato1997} and bounds are given in~\cite{meester-1996-continuum,kong2007analytical}. The RGG also has a sharp threshold for connectivity at $r_c   =  (\log n /(n V_d))^{1/d}$~\cite{gupta-1998-critical, penrose97longest} where $V_d$ is the volume of a unit ball in $\R^d$. 

The RGG has been extensively studied in fields such as cluster analysis, statistical physics, hypothesis testing, and wireless sensor networks.  One further application of the RGG is modeling data in a high-dimensional space, where the  coordinates of the nodes of the RGG represent the attributes of the data.  The metric imposed by the RGG then depicts the similarity between data elements in the high-dimensional space.  See~\cite{balister2008percolation} or~\cite{penrose:book} for a survey of results on the RGG.

In the RGG, Goel et al. have shown that  every monotone property has a threshold width (in terms of $r$) of $O(\log ^{3/4} n /\sqrt n)$ (for $d=2)$ and $O(\log ^{1/d} n / n^{1/d})$ (for $d \ge 3$)~\cite{goel-2004-sharp}. This implies a sharp threshold in the sense described above when the critical radius of a property is sufficiently large, but not for sparser graphs, and in particular not in the connectivity or giant component regimes.  For one-dimensional RGG's, McColm proved that very monotone property has a threshold function \cite{mccolm2004threshold}, in the sense of Bollob{\'a}s-Thomason.  

We prove a general criteria for sharp thresholds in the Poisson RGG.  As an application, we introduce a geometric model of random $k$-SAT in which literals are placed at random in $[0,1]^d$, and prove that satisfiability exhibits a sharp threshold in this model.  We also identify the location of this threshold in the case $k=2$.  Previously, a model of random $k$-SAT for $k=1,2$ with literals placed on a 2-dimensional lattice was proposed in \cite{schwarz2004percolation}, and in \cite{montanari2007simple} the authors investigate a model of random $k$-XOR-SAT with finite interaction range, a kind of one-dimensional geometry.

 The organization and main contributions of this paper are as follows:
\begin{enumerate}
\item In Section \ref{sec:NotSection}, we introduce notation, define two models of RGG's, and define a sharp threshold in each model.  We then define analogous models of random geometric $k$-SAT. 
\item In Section \ref{sec:sharpThresh} we state our main result: a characterization of vertex-monotone properties with sharp thresholds in the Poisson RGG.  We also state a result on transferring sharp thresholds from the Poisson to fixed-$n$ model.
\item In Section \ref{sec:geomSATsec} we state our results on random geometric $k$-SAT: for all $k\ge 2$, the satisfiability phase transition is sharp in the Poisson model.  For $k=2$, we find the location of this threshold.
\item Section \ref{proofsec} contains the proofs of the sharp threshold lemma and the sharpness of the satisfiability phase transition.
\item Sections \ref{app:density}-\ref{sec:firstmomSec} contain auxiliary results and proofs.


\end{enumerate}



\section{Models and Notation}
\label{sec:NotSection}

We will denote point sets in $[0,1]^d$ by $S, T$ and the graphs, hypergraphs or formulae formed by joining $2$ (or $k$) points that appear in a ball of radius $r$ by $G_S, G_T, F_S, F_T$ respectively.  

We denote graph properties by $A$ and write $G \in A$ if graph $G$ has property $A$.  We say a property $A$ holds `with high probability' or `whp' if $\Pr [G \in A] = 1-o(1)$ as $n \to \infty$.  We write $f(n) \sim g(n)$ if $f(n) = g(n) (1+o(1))$.

We work with two models of random geometric graphs.  $G_d(n, \mu, r)$ is the random graph formed by drawing a point set $S$ according to a Poisson point process of intensity $n \cdot \mu$ on $[0,1]^d$ and then forming $G_S$ by joining any two points at distance $\le r$.  For the hypergraph version of this model, we form a $k$-uniform hyperedge on any set of $k$ points in $S$ that appear in a ball of radius $r$.  If $t>k$ points all appear in one ball of radius $r$, then all $\binom t k$ possible $k$-uniform hyperedges are formed. The second model $G_d(n,r)$ is the random graph drawn by placing $n$ points uniformly and independently at random in $[0,1]^d$ to form $S$, then forming $G_S$ by connecting points at distance $\le r$.  Note that $G_d(n,r)$ has the same distribution as $G_d(n,\mu,r)$ conditioned on $|S|=n$.  

We say a property $A$ has \emph{sharp threshold} in $G_d(n,\mu,r)$ if there exists a function $\mu^*(n), r(n)$ so that for any $\eps >0$,
\begin{enumerate}
\item For $\mu > (1+\eps) \mu^*$, $\Pr[ G_d(n,\mu,r) \in A] = 1-o(1)$.
\item For $\mu < (1-\eps) \mu^*$, $\Pr[ G_d(n,\mu,r) \in A] = o(1)$.
\end{enumerate}

For $G_d(n,r)$ it is more convenient to describe a sharp threshold in terms of the probability that two random points in $[0,1]^d$ form an edge\footnote{For constant dimension $d$, this definition is equivalent to asking for a critical threshold radius $r^*$, but for $d=d(n) \to \infty$, allowing $r$ to increase by a factor $(1+\eps)$ will cause a super-constant factor increase in the number of edges of the graph.}.  We write $r(p)$ for the radius that achieves edge probability $p$.   With this definition, we say that a property $A$ has \emph{sharp threshold} in $G_d(n,r)$ if there exists a function $p^*(n)$ so that for any $\eps >0$,
\begin{enumerate}
\item For $p > (1+\eps) p^*$, $\Pr[ G_d(n,r(p)) \in A] = 1-o(1)$.
\item For $p < (1-\eps) p^*$, $\Pr[ G_d(n,r(p)) \in A] = o(1)$.
\end{enumerate}

For the $k$-SAT problem, we will work with formulae on $n$ boolean variables $x_1, \dots, x_n$.  A \textit{literal} is a variable $x_i$ or its negation $\overline x_i$.  We say a formula $F \in SAT$ if $F$ is satisfiable.  

We define two random geometric distributions over $k$-SAT formulae, $F_k(n,\gamma)$ and $F_k(n,\mu)$:
\begin{itemize}
\item $F_k(n,\gam)$: Randomly place $2n$ points uniformly and independently in $[0,1]^d$ each labeled with the name of a unique literal in $\{ x_1, \dots, x_n, \overline x_1, \dots, \overline x_n\}$.  For any set of $k$ literals that appear in a ball of radius $r= \gamma n^{-1/d}$, form the corresponding $k$-clause and add it to the random formula.  
\item $F_k(n,\mu)$: Draw independent Poisson point processes of intensity $\mu$ on $[0,1]^d$ for each of the $2n$ literals.  For any set of $k$ literals that appear in a ball of radius $r=n^{-1/d}$, add the corresponding clause.
\end{itemize}

Note that $F_k(n,\gam)$ with $\gam=1$ has the same distribution as $F_k(n,\mu)$ conditioned on each literal appearing exactly once.

In this work, we will consider $k, \gam, \mu$ and $d$ fixed with respect to $n$, and take asymptotics as $n \to \infty$.   We use $\ell_\infty$ balls for simplicity in what follows, but all results hold for Euclidean balls as well, with constants involving the volume of the $d$-dimensional unit sphere.

Another natural model to consider would be the following, call it $\tilde F(n, r)$: randomly place $n$ points uniformly and independently in $[0,1]^d$, each labeled with the name of a variable $x_1, \dots x_n$ (instead of  the name of a literal).  Then for each set of $k$ variables appearing in a ball of radius $r$, add a $k$-clause with the signs of the $k$ variables chosen uniformly and independently from the $2^k$ possible choices.  The threshold behavior of satisfiability in $\tilde F(n, r)$ is simpler than in the other two models: the threshold is coarse, and determined locally by large cliques of variables (see Section~\ref{sec:coarse}).

\section{Sharp Thresholds in Random Geometric Graphs}
\label{sec:sharpThresh}


The following theorem characterizes vertex-monotone properties with sharp thresholds in the Poissonized random geometric graph $G_d(n,\mu,r)$.  It is an application of Bourgain's theorem in the appendix of Friedgut's paper on sharp thresholds in random graphs \cite{friedgut1999sharp}. 


\begin{theorem}
\label{GenSharpLemma}
Let $A$ be a vertex-monotone property of a $k$-uniform hypergraph that does not have a sharp threshold in $G_d(n, \mu, r)$. Then there exists constants $\eps, \del, K >0$ independent of $n$ so that for arbitrarily large $n$ there is an $\alpha \in (\del,1-\del)$ so that either
\begin{enumerate}
\item $\Pr_{G_d(n, \mu,r)} [\exists H \subseteq S: |H| \le K, G_H \in A] \ge \eps$,

or
\item  There exists a point set $T$ in $[0,1]^d$ with $| T | \le K$, $G_T \notin A$ so that 
\[ \Pr  [G_d(n, \mu,r) \in A | T \subseteq S]  \ge \alpha+ \eps \, . \]
\end{enumerate}
with $\mu$ chosen so that $\Pr_{G_d(n, \mu,r)}[A]=\alpha$.
\end{theorem}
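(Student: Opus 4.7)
The plan is to reduce the statement to Bourgain's dichotomy theorem for monotone Boolean functions on a discrete product space --- the same tool Friedgut used for $G(n,p)$ in \cite{friedgut1999sharp} --- applied after replacing the continuous Poisson process on $[0,1]^d$ by a fine Bernoulli approximation. The main task is to set up the discretization, transfer the ``no sharp threshold'' hypothesis into the coarse-threshold hypothesis required by Bourgain, and translate the output back to the geometric setting.

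First I would partition $[0,1]^d$ into $N = \eta^{-d}$ cubes of side $\eta = \eta(n)$, with $\eta$ chosen so that $n\mu\eta^d \to 0$ (so with high probability at most one Poisson point occupies each cube) and $\eta \ll r$ (so moving a point inside its cube cannot change any adjacency). Letting $X_C \in \{0,1\}$ indicate whether cube $C$ is occupied, $(X_C)_C$ is, up to a vanishing coupling error, a Bernoulli product with parameter $p = 1 - e^{-n\mu\eta^d}$, and by vertex-monotonicity the event $A$ corresponds to a monotone Boolean function $f_A$ on $\{0,1\}^N$. Next I would argue that ``no sharp threshold'' implies the existence of arbitrarily large $n$ and a $\mu$ with $\Pr[A] = \alpha \in (\del,1-\del)$ for which $\mu \cdot d\Pr[A]/d\mu \le C$: otherwise integrating the derivative across $\mu \in [(1-\eps)\mu^*,(1+\eps)\mu^*]$ would move $\Pr[A]$ from $o(1)$ to $1-o(1)$, giving a sharp threshold after all. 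Under the discretization this bound becomes $p \cdot d\mu_p(f_A)/dp \le C'$, the coarse-threshold hypothesis Bourgain requires.

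Bourgain's theorem then yields one of two alternatives for $f_A$: either (a) there is a set of at most $K$ cubes whose simultaneous occupancy already gives $f_A = 1$ with probability at least $\eps$, or (b) there is a set $\mathcal{T}$ of at most $K$ cubes, not already forcing $f_A = 1$, for which conditioning on all cubes of $\mathcal{T}$ being occupied boosts $\mu_p(f_A)$ to at least $\alpha + \eps$. Case (a) directly yields conclusion (1): the occupied cubes contain a subset $H\subseteq S$ of size $\le K$ with $G_H \in A$. For case (b) I would choose one representative point in each cube of $\mathcal{T}$ to form $T$; since $\eta \ll r$ the pairwise distances within $T$ match those forced by the occupancy pattern in the discrete model, so $G_T \notin A$, and Slivnyak--Palm calculus identifies conditioning on $T \subseteq S$ in the Poisson model with adding $T$ to an independent Poisson process of intensity $n\mu$ --- matching, up to the Bernoulli--Poisson error, the conditional probability in the discrete model and delivering conclusion (2).

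The main obstacle I expect is the quantitative control of the two approximation errors. One must choose $\eta$ small enough that both the Poisson-to-Bernoulli coupling error and the geometric discretization error are negligible, yet keep $N = \eta^{-d}$ polynomial in $n$ so that Bourgain's theorem yields constants $K, \eps, \del$ independent of $n$. The most delicate piece is transferring the logarithmic-derivative bound from $\mu$ to $p$ uniformly over the $(1\pm\eps)$-window of parameters, since a single-point bound is not enough to deduce a coarse threshold for $f_A$ from a coarse threshold for $A$; uniform smallness of the discretization errors across this window is what makes the reduction go through.
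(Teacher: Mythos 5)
Your proposal follows essentially the same route as the paper: discretize $[0,1]^d$ into a fine grid of independent Bernoulli cells, couple the Bernoulli product with the Poisson process so the resulting hypergraphs agree whp, use the mean value theorem to convert ``no sharp threshold'' into Bourgain's bounded-logarithmic-derivative hypothesis, and translate Bourgain's two alternatives back into conditions (1) and (2). One small correction: $\eta \ll r$ does not deterministically prevent adjacencies from flipping (pairs at distance within $O(\eta)$ of $r$ can still change status), so this must be handled probabilistically --- as the paper does, by a union bound showing whp no pair of points lies in the critical annulus --- but this is exactly the ``geometric discretization error'' you already flag, and it does not affect the correctness of the approach.
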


In other words, if a property does not have a sharp threshold, then either there is a constant probability that a constant-size witness of $A$ exists in the RGG or there is a point set of constant size in $[0,1]^d$ that by itself does not have property $A$, but by conditioning on the presence of these points significantly raises the probability of $A$ in the RGG. To prove that a property has a sharp threshold, we rule out both of these possibilities.

We can connect sharp thresholds in $G_d(n, \mu, r)$ with those in $G_d(n,r)$.  In particular, if the threshold intensity $\mu^*(n)$ has a limit, then there is a sharp threshold edge probability $p^*$ in $G_d(n,r)$, and it too is uniform in $n$, up to a technical condition on the form of the threshold density.

\begin{prop}
\label{prop:muRsharp}
Let $q(n) = a \log^b(n) n^{-c}$ be a decreasing function of $n$ for constants $a,b,c$.  Suppose a vertex and edge-monotone property $A$ has a uniform sharp threshold in $G_d(n,\mu,r)$: there exists a constant $\mu^*$, independent of $n$, so that
\begin{enumerate}
\item For $\mu > (1+\eps) \mu^*$, $\Pr[ G_d(n,\mu,r(q(n))) \in A] = 1-o(1)$
\item For $\mu < (1-\eps) \mu^*$, $\Pr[ G_d(n,\mu,r(q(n))) \in A] = o(1)$, 
\end{enumerate}
then $A$ has a sharp threshold in $G_d(n,r)$ in a uniform sense: there exists a $t^*$, independent of $n$, so that
\begin{enumerate}
\item For $p > (1+\eps) t^* q(n)$, $\Pr[ G_d(n,r(p)) \in A] = 1-o(1)$.
\item For $p < (1-\eps) t^* q(n)$, $\Pr[ G_d(n,r(p)) \in A] = o(1)$.
\end{enumerate}
\end{prop}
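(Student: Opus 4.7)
The plan is to identify $t^* = (\mu^*)^c$ and transfer the Poisson sharp threshold at $\mu^*$ to the fixed-$n$ model by re-applying the hypothesis at a rescaled value of $n$, followed by standard de-Poissonization. The value $(\mu^*)^c$ is dictated by the polynomial scaling $q(Kn) = a \log^b(Kn)(Kn)^{-c} = K^{-c}q(n)(1+O(1/\log n))$: rescaling the Poisson intensity from $n\mu^*$ to $n$ (a factor of $1/\mu^*$) is balanced, under this form of $q$, by rescaling the edge probability by $(\mu^*)^c$.

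For the upper direction, fix $\eps>0$ with $p > (1+\eps)(\mu^*)^c q(n)$. I pick auxiliary $\eps',\delta>0$ so small that $(1+\eps') \le (1-\delta)(1+\eps)^{1/c}$, which is possible because $(1+\eps)^{1/c}>1$. Setting $\mu_0 = (1+\eps')\mu^*$ and $n' = \lfloor n(1-\delta)/\mu_0 \rfloor$ gives $n'\mu_0 \le n(1-\delta)$, and the scaling above applied with $K=(1-\delta)/\mu_0$ yields $r(q(n')) \le r(p)$ for all large $n$. The Poisson sharp-threshold hypothesis applied at $n'$ gives $G_d(n',\mu_0,r(q(n'))) \in A$ w.h.p.; coupling with the Poisson $G_d(n,1-\delta,r(p))$ via thinning and noting $r(q(n')) \le r(p)$, the vertex- and edge-monotonicity of $A$ promotes the property to $G_d(n,1-\delta,r(p)) \in A$ w.h.p. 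Finally I de-Poissonize: reading the vertex sets of $G_d(n,1-\delta,r(p))$ and $G_d(n,r(p))$ as prefixes of a single i.i.d.\ uniform sequence, the Poisson count $N \sim \mathrm{Poisson}(n(1-\delta))$ satisfies $N \le n$ w.h.p.\ by Chernoff, and vertex monotonicity then yields $G_d(n,r(p)) \in A$ w.h.p.

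The lower direction is handled symmetrically: replace $\mu_0 = (1+\eps')\mu^*$ by $\mu_0 = (1-\eps')\mu^*$ (so the source Poisson lies below its sharp threshold and fails $A$ w.h.p.) and couple instead with $G_d(n,1+\delta,r(p))$, whose vertex set w.h.p.\ dominates that of $G_d(n,r(p))$, in order to transfer the failure of $A$ back to the fixed-$n$ model.

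The main obstacle is the simultaneous calibration of $\eps'$ and $\delta$ so that all three inequalities $\mu_0 > (1+\eps')\mu^*$, $n'\mu_0 \le n(1-\delta)$, and $r(q(n')) \le r(p)$ hold inside the $(1+\eps)$-slack of the target threshold. This is exactly where the polynomial form of $q$ is essential: the asymptotic $q(Kn)=K^{-c}q(n)(1+o(1))$ is sharp at leading order, so both the $o(1)$ scaling correction and the integer rounding of $n'$ can be absorbed into $\delta$ for all sufficiently large $n$.
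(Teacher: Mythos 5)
Your proposal is correct and follows essentially the same route as the paper's proof: set $t^*=(\mu^*)^c$, rescale $n$ so that the Poisson hypothesis applies just inside the $(1\pm\eps)$ slack, use the polynomial form of $q$ to verify $q(n')\le p$ (resp.\ $\ge p$), and finish with Poisson concentration plus vertex- and edge-monotonicity. Your version is somewhat more explicit about the coupling and the calibration of $\eps'$ and $\delta$, but the substance is identical.
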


Here we think of $q(n)$ as a typical threshold function, for example: $1/n, c/n^2, \log^2 n/n,$ etc.  The technical condition on $q$ is required to rule out properties whose definition depends non-uniformly on $n$, eg. for small $n$, $A$ is  the property of containing a triangle, while for large $n$, it is the property of containing an edge.

We conjecture that in fact all edge-monotone properties in $G_d(n,r)$ can be characterized similarly:

\begin{conj}
For every edge-monotone property $A$ with a coarse threshold in $G_d(n,r(p))$ with respect to $p$, there are constants  $K, \eps, \del>0$ so that for large $n$, $\alpha \in (\del, 1-\del)$, and $p$ chosen so that $\Pr[G_d(n,r(p))\in A] = \alpha$,  either
\begin{enumerate}
\item $\Pr_{G_d(n, r(p))} [\exists H \subseteq S: |H| \le K, G_H \in A] \ge \eps$,

or
\item  There exists a point set $T$ in $[0,1]^d$ with $| T | \le K$, $G_T \notin A$ so that 
\[ \Pr  [G_d(n, r(p)) \in A | T \subseteq S]  \ge \alpha+ \eps \, . \]
\end{enumerate}
\end{conj}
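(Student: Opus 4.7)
My plan is to follow the strategy used to prove Theorem \ref{GenSharpLemma}, with modifications to accommodate (i) edge-monotonicity in place of vertex-monotonicity, and (ii) the fixed-$n$ model in place of the Poisson model. The central difficulty is that the monotone parameter here is the radius $r$ (equivalently the edge probability $p$), which does not directly parameterize the product measure on point configurations --- in contrast to the intensity $\mu$, which does.

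As a first step I would transfer the coarse threshold from the fixed-$n$ model to a Poisson model. Since $G_d(n, r)$ has the same distribution as $G_d(n, 1, r)$ conditioned on $|S| = n$, and $|S|$ concentrates sharply around $n$ by standard Poisson tail bounds, a coarse threshold in $p$ in $G_d(n, r(p))$ should imply a coarse threshold in $p$ in $G_d(n, 1, r(p))$. This mirrors the coupling used in Proposition \ref{prop:muRsharp} but runs in the opposite direction and with $p$, not $\mu$, as the varying parameter. Edge-monotonicity is preserved under this transfer.

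Next, in the Poisson model I would apply a Bourgain-style sharp threshold criterion with $p$ as the parameter. Partitioning $[0,1]^d$ into small subcubes yields a product decomposition of the point process. I would then seek to apply a suitable form of Bourgain's theorem from the appendix of \cite{friedgut1999sharp} to conclude that a coarse threshold in $p$ forces a local obstruction: either a bounded-size configuration of points that by itself witnesses $A$ (alternative (1)), or a bounded-size point set $T$ with $G_T \notin A$ whose insertion raises the conditional probability of $A$ by a constant $\eps$ (alternative (2)). Transferring the conclusion back to the fixed-$n$ model via the coupling above would then complete the argument.

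The main obstacle lies in the application of Bourgain's theorem. For Theorem \ref{GenSharpLemma} the monotone parameter is $\mu$, which directly controls the rate of the underlying Poisson product measure, so Bourgain's theorem applies in a standard way. For the conjecture the monotone parameter is $r$, which does not change the distribution of points but instead alters the deterministic map from point sets to edge sets; moreover, the induced edge variables are strongly correlated through shared positions. One therefore needs either a version of Bourgain's theorem in which the product measure is fixed and only the threshold for each coordinate moves with $r$, or, alternatively, an equivalence (modulo scaling) between coarse thresholds in $r$ and coarse thresholds in an auxiliary intensity parameter for a suitably derived vertex-monotone property. Providing such an adaptation seems to be the principal missing ingredient and is likely the reason the statement is left as a conjecture rather than a theorem.
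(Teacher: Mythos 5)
This statement is stated in the paper as a conjecture and no proof is given there, so there is nothing to compare your argument against; what matters is whether your proposal actually closes the gap, and by your own admission it does not. Your outline is the natural one (and the first two steps parallel what the paper does for Theorem \ref{GenSharpLemma} and Proposition \ref{prop:muRsharp}), but the step you flag as the ``principal missing ingredient'' is indeed a genuine gap, not a routine adaptation. Bourgain's theorem is a statement about a monotone subset of a product space $\{0,1\}^N$ equipped with the product measure with coordinate probability $p$, and its hypotheses ($p\cdot d\Pr_p(A)/dp \le C$, $p = o(1)$) refer to varying that coordinate probability. In the discretization of the RGG the product coordinates are grid-point occupancies, and increasing $r$ does not change the measure on those coordinates at all --- it changes the map from occupancy configurations to graphs. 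A coarse threshold in $r$ therefore gives no bound on the derivative of $\Pr(A)$ with respect to the coordinate probability, which is the quantity Bourgain's theorem needs. Moreover, an edge-monotone property need not even be monotone as a subset of the occupancy cube (adding a point adds edges but also adds a vertex), so the hypothesis of monotonicity in the product coordinates can fail outright. Your first transfer step (fixed-$n$ to Poisson) also quietly requires vertex-monotonicity, or at least the technical conditions of Proposition \ref{prop:muRsharp}, which are not granted by the conjecture's hypotheses.

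In short: your diagnosis of where the difficulty lies is correct and well articulated, but the proposal is a programme, not a proof. To make it one, you would need either a genuinely new Fourier-analytic sharp-threshold criterion in which the parameter deforms the monotone family $\{A_r\}$ rather than the measure, or a reduction showing that a coarse threshold in $r$ at fixed intensity forces a coarse threshold in $\mu$ at fixed $r$ for some associated vertex-monotone property. Neither is supplied, and neither is known; that is precisely why the statement remains a conjecture.
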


\section{Random geometric $k$-SAT}
\label{sec:geomSATsec}

As an application of Theorem \ref{GenSharpLemma}, we prove that in the $F_k(n,\mu)$ model, the threshold for satisfiability is sharp:

\begin{theorem}
\label{sharpnessthm}
For all $k$, there exists a function $\mu_k^*(n)$ so that for every $\eps >0$,
\begin{enumerate}
\item For $\mu < \mu_k^*(n) - \eps$, $F_k(n, \mu) \in SAT$ whp. 
\item For $\mu > \mu_k^*(n) + \eps$, $F_k(n, \mu) \notin SAT$ whp.
\end{enumerate}
\end{theorem}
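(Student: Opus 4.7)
The plan is to argue by contradiction, applying Theorem \ref{GenSharpLemma} to UNSAT (which is vertex-monotone in the Poissonized formula hypergraph, since adding literal points only adds clauses and can only hurt satisfiability), and then ruling out both alternatives of that dichotomy. The overall strategy mirrors Friedgut's classical proof \cite{friedgut1999sharp} of the sharp threshold for uniform random $k$-SAT, adapted to the geometric Poisson setting.

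Suppose UNSAT has no sharp threshold. Theorem \ref{GenSharpLemma} then supplies constants $\eps, \del, K > 0$ and, along a subsequence of $n$, a value of $\mu$ with $\Pr[F_k(n,\mu)\notin \mathrm{SAT}] = \alpha \in (\del,1-\del)$ such that one of the following holds: (i) with probability at least $\eps$, some sub-formula of $F_k(n,\mu)$ on at most $K$ literal points is already unsatisfiable; or (ii) there is a labeled point set $T\subset[0,1]^d$ of size at most $K$ with $F_T\in\mathrm{SAT}$ such that planting $T$ into $F_k(n,\mu)$ raises $\Pr[\mathrm{UNSAT}]$ to at least $\alpha+\eps$. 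To rule out (i) I would perform a first-moment computation: by Aharoni--Linial, every minimally unsatisfiable $k$-CNF on $j$ variables has at least $j+1$ clauses, so for any fixed pattern with $m\ge j+1$ clauses the expected number of copies appearing in $F_k(n,\mu)$ is of order $n^{j-m(k-1)}$ times a $\mu$-dependent constant (using $r^d = 1/n$ for the volume factor from each clause's closeness constraint), which is $o(1)$ because $m(k-1)\ge j+1>j$ for $k\ge 2$; summing over the finite list of patterns of total size $\le K$ still gives $o(1)$, contradicting (i).

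To rule out (ii) I would exploit the joint symmetries of $F_k(n,\mu)$: its distribution is invariant under any variable permutation $\sigma\in S_n$ and any sign-flip $\tau\in\{0,1\}^n$, so for every such $(\sigma,\tau)$ the relabeled point set $T_{\sigma,\tau}$ is itself a valid $(\alpha+\eps)$-booster. Drawing $N$ independent uniform pairs $(\sigma_i,\tau_i)$, a birthday-type estimate ensures that the variable supports $V(T_{\sigma_i,\tau_i})$ are pairwise disjoint with probability $1-o(1)$, after which a Friedgut-style inductive boosting argument shows that for some $N=N(\eps,\del)$, planting all $N$ relabeled copies raises $\Pr[\mathrm{UNSAT}]$ to at least $1-\del/2$. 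Each planted copy, however, contributes only $O(1)$ new clauses (its internal clauses plus $O(1)$ expected clauses from interaction with the ambient Poisson literals, controlled via Palm calculus), so the total perturbation is equivalent to an effective change in $\mu$ of order $O(N/n)=o(1)$, much smaller than the positive-width coarse-threshold window; this contradicts the hypothesis that $\Pr[\mathrm{UNSAT}]$ remains close to $\alpha<1-\del$ throughout that window.

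The main obstacle will be making the inductive boosting step in (ii) rigorous: despite the $N$ planted copies sharing the common ambient formula $F_k(n,\mu)$, one must show that each fresh copy $T_{\sigma_{i+1},\tau_{i+1}}$, when added to $F_k(n,\mu)\cup\bigcup_{j\le i}F_{T_{\sigma_j,\tau_j}}$, still raises $\Pr[\mathrm{UNSAT}]$ by a nontrivial amount. In the classical $k$-SAT setting this is a clean symmetry/correlation computation on Boolean functions; in the Poisson geometric setting one additionally has to verify that conditioning on $T\subseteq S$ is benign in the Palm/Mecke sense, so that the extra clauses produced by $T$'s interaction with the ambient process are only $O(1)$ in expectation and do not distort the local clause density.
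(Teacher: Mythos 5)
Your overall strategy (apply Theorem \ref{GenSharpLemma} to $\unsat$, which is indeed vertex-monotone, and rule out both alternatives) is the same as the paper's, but both of your concrete steps have genuine gaps.

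For alternative (i), your first-moment estimate $n^{j-m(k-1)}$ per minimally unsatisfiable pattern treats the $m$ clauses as independent events each costing $n^{-(k-1)}$. In the geometric model this is false: clauses induced by a point set are heavily positively correlated, and a single ball containing $t$ literal points produces all $\binom{t}{k}$ clauses at cost only $\Theta(n^{-(t-1)})$. For instance, with $k=2$ the minimally unsatisfiable formula $(x_1\vee x_2)(x_1\vee\overline x_2)(\overline x_1\vee x_2)(\overline x_1\vee\overline x_2)$ has $j=2$ and $m=4$; your formula predicts expected count $\Theta(n^{-2})$, but the configuration with all four literals in one ball already occurs with expected count $\Theta(n^{-1})$. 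The conclusion happens to survive here, but the inequality $m(k-1)>j$ is not what makes it survive; the correct bookkeeping must be in terms of the number of literal points and the connectivity of the point configuration (and must also allow repeated points of the same literal), not the number of clauses. The paper avoids this enumeration entirely by observing that an unsatisfiable configuration of $R$ positioned literals induces an unsatisfiable $2$-SAT formula on the same points, hence a bicycle of length at most $R$, and then bounding bicycle probabilities with the clause dependence handled explicitly (Claim \ref{muClaim}).

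For alternative (ii), the step you yourself flag as the main obstacle is the crux, and it is not resolved by symmetry plus a birthday estimate. After planting $i$ relabeled copies of the booster, the conditional law of the ambient formula is no longer that of $F_k(n,\mu)$, so there is no a priori reason each fresh copy boosts $\Pr[\unsat]$ by a fixed amount; this is exactly the difficulty Friedgut's machinery is designed to circumvent. The paper's route is different: it shows that conditioning on one satisfiable booster $H$ is dominated, with respect to inducing unsatisfiability, by adding $O(1)$ uniformly random unit clauses (after fixing the assignment on the variables of $H$ and checking that whp no new clause falls entirely inside those variables); it then invokes Lemma 5.7 of \cite{friedgut1999sharp} to trade these unit clauses for $\sqrt n$ uniformly random $k$-clauses, realizes those as a Poisson sprinkling of intensity $n^{-\delta}$, and finally uses the bounded-derivative hypothesis $\mu\, d \Pr_\mu(\unsat)/ d \mu \le C$ to conclude the sprinkling changes the probability by only $O(n^{-\delta})$, contradicting the $\eps$ boost. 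Your closing remark that $N$ planted copies amount to an effective change in $\mu$ of order $O(N/n)$ conflates planting fixed positioned literals with increasing the Poisson intensity; bridging those two regimes is precisely what the domination chain provides, and without it (or an equivalent) the proof does not close.
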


Next, for $k=2$ we determine the exact location of the satisfiability threshold in both models:

\begin{theorem}
\label{2satthm}
For any $\eps>0$,
\begin{enumerate}
\item If $\gam < 2^{-(1+1/d)} - \eps$, then whp $F_2(n,\gam) \in SAT$.  If $\gam > 2^{-(1+1/d)} + \eps$, then whp $F_2(n,\gam) \notin SAT$.  
\item If $\mu <  2^{-(d+1)/2} - \eps$, then whp $F_2(n, \mu) \in SAT$.  If $\mu > 2^{-(d+1)/2}+ \eps$, then whp $F_2(n, \mu) \notin SAT$.
\end{enumerate}
\end{theorem}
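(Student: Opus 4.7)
The plan is to mimic the classical Chv\'atal-Reed analysis of the random 2-SAT threshold, working in the 2-SAT \emph{implication digraph} on the $2n$ literals, in which each clause $(\ell_1\vee\ell_2)$ contributes directed edges $\bar\ell_1\to\ell_2$ and $\bar\ell_2\to\ell_1$; a 2-CNF formula is unsatisfiable iff some variable $x_i$ lies in a strongly connected component with $\bar x_i$. First I would verify that the claimed thresholds correspond to the critical clause density of one clause per variable. In $F_2(n,\gam)$ the pairwise probability that two specified literals share a clause is $(2r)^d=2^d\gam^d/n$, so the expected number of clauses is $\sim 2^{d+1}\gam^d n$ and the expected out-degree of every literal is $\sim 2^{d+1}\gam^d$; both equal $n$ and $1$ respectively exactly at $\gam^*=2^{-(1+1/d)}$. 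The analogous quantities in $F_2(n,\mu)$ (where the radius is $n^{-1/d}$) replace $\gam^d$ by $\mu^2$, giving $\mu^*=2^{-(d+1)/2}$.

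For the satisfiability direction ($\gam<\gam^*-\eps$), I would apply the first moment method to Chv\'atal-Reed bicycles: short closed implication chains $\ell_0\to\ell_1\to\cdots\to\ell_s$ in which some variable and its negation both appear, which serve as minimal UNSAT witnesses. Summing over $s$ and over literal sequences yields a bound of the form $\sum_s n\,(2^{d+1}\gam^d)^s$ up to polynomial-in-$s$ factors; since the branching factor $2^{d+1}\gam^d$ is strictly below $1$ by a constant gap, the geometric series is $o(1)$, so whp no bicycle exists and the formula is satisfiable.

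For the unsatisfiability direction ($\gam>\gam^*+\eps$) the branching factor exceeds $1$, so the forward-reachable set from a typical literal stochastically dominates a supercritical Galton-Watson process, and whp the implication digraph develops a strongly connected component of linear size. By the symmetry exchanging each $x_i$ with $\bar x_i$, both a literal and its negation fall in this giant SCC whp, forcing UNSAT; alternatively one performs a second-moment estimate on contradictory cycles of length $O(\log n)$. The two models $F_2(n,\gam)$ and $F_2(n,\mu)$ are handled in parallel, since the relevant geometric probabilities agree after the substitution $\gam^d\leftrightarrow\mu^2$, and one can additionally transfer between them via de-Poissonization (noting that $F_2(n,\gam=1)$ coincides distributionally with $F_2(n,\mu)$ conditioned on each literal appearing exactly once, as observed in Section~\ref{sec:NotSection}).

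The principal obstacle is correlations between implication-digraph edges sharing a common literal: in classical random 2-SAT such edges are fully independent, whereas here they are coupled through the shared literal's position. For the first-moment bicycle bound this is harmless (linearity is insensitive to dependence), but the supercritical branching and second-moment arguments require either conditioning on all literal positions (after which clause events in disjoint regions of $[0,1]^d$ become independent by independence of the underlying Poisson / uniform placements) or partitioning the cube into sub-cubes of side much larger than $r$ and exploiting near-independence between distant sub-cubes. Boundary effects near $\partial[0,1]^d$ contribute at most an $o(1)$ correction since $r\to 0$.
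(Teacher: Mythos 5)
Your threshold identification and your satisfiable-side strategy match the paper: the paper also runs a first-moment count of Chv\'atal--Reed bicycles in the implication digraph, splitting into long paths and short bicycles. But one of your justifications there is wrong. You say the within-bicycle correlations are ``harmless (linearity is insensitive to dependence).'' Linearity only removes the need for independence \emph{between different} bicycles; you still must bound the probability that a \emph{single} bicycle appears, and inside one bicycle the endpoint literals $u,v$ repeat literals among the $w_i$'s, so in the Poisson model $F_2(n,\mu)$ the constituent clause events are genuinely correlated through the shared literal's (multiple) positions. The paper's Claim~\ref{muClaim} handles exactly this by conditioning on the number of appearances of each repeated literal and case-splitting on how $u,v$ coincide with the $w_i$'s; the upshot is only a constant multiplicative correction, so your conclusion survives, but not for the reason you give. (In $F_2(n,\gam)$ the clauses of a bicycle form a forest in the literal--clause graph and are in fact independent, which is why that model is easier.)

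The serious gap is on the unsatisfiable side. Your primary route --- supercritical branching, a linear-size strongly connected component, then ``by the symmetry exchanging $x_i$ with $\bar x_i$, both a literal and its negation fall in this giant SCC'' --- does not work as stated. The implication digraph has the skew-duality $\ell\to\ell' \iff \bar\ell'\to\bar\ell$, so SCCs come in mirror pairs $(C,\bar C)$; symmetry tells you $x_i$ and $\bar x_i$ are equally likely to lie in a giant SCC, not that they lie in the \emph{same} one, and a pigeonhole argument would require the giant SCC to contain more than half of the $2n$ literals, which barely-supercritical branching does not give. Your fallback --- a second-moment estimate on contradictory structures of logarithmic length --- is the paper's actual route (snakes of length $\log^2 n$), but you do not execute the variance bound, and that is where essentially all the difficulty of this theorem lives: when two snakes overlap, their union contains cycles in the literal--clause graph, precisely the configurations where the geometric model loses the clause-independence that makes classical random 2-SAT tractable. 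The paper's Claim~\ref{overlapclaim1} and its $F_2(n,\mu)$ analogue spend several pages classifying overlaps into runs, hinge cycles, and extra shared literals to show $p_{ij}(n/(2\gam)^d)^{i+j}$ sums to $o(1)$; ``conditioning on literal positions'' or ``partitioning into sub-cubes'' does not substitute for this, because the problematic dependencies occur exactly between snakes whose literals land in overlapping balls. As written, the proposal establishes the satisfiable half (with the caveat above) but not the unsatisfiable half.
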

Note that from Proposition~\ref{clausesProp} in Section~\ref{app:density}, both thresholds occur at $m=n$ clauses, matching the threshold for random $2$-SAT.

\section{Proofs}
\label{proofsec}

\subsection{Proof of Theorem \ref{GenSharpLemma}}

To prove Theorem \ref{GenSharpLemma}, we discretize $[0,1]^d$ and place points independently at each gridpoint with a given probability.  We apply Bourgain's theorem in a dual fashion, to the product space over positioned points instead of the product space of edges as in $G(n,p)$.  We then show that with a fine enough discretization, the graph formed in the discrete model is identical to the graph formed in the Poisson model with high probability. 

We will prove the theorem for labeled $k$-uniform hypergraphs, where the label set is $\{1, 2, \dots, L(n)\}$ and the dimension $d= d(n)$ may be constant or tend to infinity with $n$.  Points with label $i$ will appear in $[0,1]^d$ according to a Poisson point process of intensity $n\mu / L$, with all labels appearing independently (thus the union of all labeled points is itself a Poisson point process of intensity $n\mu$).  For a random geometric graph we can specialize to $k=2$ with a single label.  For random geometric $k$-SAT, the label set will have size $2n$, one label for each literal.  

Place $N^d$ grid points onto $[0,1]^d$ where $N = 16^{d} n^3$ so that gridpoint $(i_1, \dots, i_d)$ is located at $( (i_1-1/2)/N, \dots  (i_d-1/2)/N)$ and each $i_j$ ranges over $\{1, \dots N\}$. To that gridpoint, assign the region $ A_{i_1, \dots, i_d}= ( (i_1-1)/N, i/N] \times \cdots \times ( (i_d-1)/N, i_d/N]$. At each grid point, let each of the $L$ possible labels appear independently with probability $p = \mu n / LN^{d}$ (more than one label can appear at a single grid point).  For every set of $k$ labeled points that appear in a ball of radius $r$ (in $l_2$ or $l_\infty$ distance, depending on the model), include the corresponding hyperedge in the hypergraph.  The following proposition allows us to transfer results from the discrete model to the continuous model:

\begin{prop}
\label{discProp}
There is a coupling of the discrete and continuous model so that with probability $1 - o(1)$, the labeled hypergraph generated by each is identical.
\end{prop}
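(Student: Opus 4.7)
I would construct the coupling cell-by-cell and label-by-label, then control two sources of discrepancy: mismatches between the per-cell point counts, and $k$-tuples whose edge status changes when continuous positions are replaced by cell centers.

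\emph{Coupling the counts.} For each of the $N^d$ cells and each of the $L$ labels, couple a Poisson$(p)$ count (continuous model) with a Bernoulli$(p)$ indicator (discrete model), where $p = n\mu/(LN^d)$, in the standard way so that they agree whenever the Poisson value is at most $1$; the disagreement probability is $O(p^2)$. A union bound over the $LN^d$ cell-label pairs gives an expected number of disagreements of order $LN^d \cdot p^2 = n^2\mu^2/(LN^d)$, and with $N = 16^d n^3$ this is $o(1)$ whether $L$ is constant or $L = 2n$. On the resulting high-probability event, every cell contains at most one point of each label and the per-label counts agree exactly between the two models.

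\emph{Coupling the positions.} On the event above, each labeled continuous point is paired with a grid point; conditional on the count in a cell being $1$, the continuous position is uniform in that cell (of side $1/N$) while the discrete position sits at the cell center, so the two differ by at most $\sqrt d/(2N)$. Since the minimum enclosing ball radius of a $k$-tuple is $1$-Lipschitz in the positions of its points, the edge status of a $k$-tuple can differ between the models only when the continuous $k$-tuple has min enclosing radius within $\sqrt d/(2N)$ of $r$. I would bound the expected number of such ``borderline'' $k$-tuples directly: the expected number of $k$-subsets of the Poisson process lying in some common ball of radius at most $R$ is $O((n\mu)^k R^{d(k-1)})$, so the ``density'' at radius $R$ is $O((n\mu)^k R^{d(k-1)-1})$; multiplying by the window width $O(1/N)$ and plugging in $r = \Theta(n^{-1/d})$ gives expected count $O(n^{1+1/d}/N) = O(n^{1/d-2})$, which is $o(1)$ for any fixed $d$.

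A final union bound over the two events yields the desired coupling with probability $1-o(1)$. The main technical step is the borderline estimate, which requires differentiating a volume in $\R^{d(k-1)}$ and tracking the geometric constants from min enclosing balls; the restriction to $k$-tuples with distinct labels in the $k$-SAT case only decreases the count, so the generic bound suffices.
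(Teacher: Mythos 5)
Your coupling of the per-cell, per-label counts is essentially the paper's: the paper makes the maximal coupling explicit (it adds an auxiliary coin of heads-probability $e^{\mu n/LN^d}(\mu n/LN^d-(1-e^{-\mu n/LN^d}))$ so that the discrete marginal is exactly Bernoulli$(p)$), and then union-bounds the $O(p^2)$ failure events over all $LN^d$ cell--label pairs and over double occupancy of a cell, exactly as you do. One small imprecision: you cannot arrange that the two models ``agree whenever the Poisson value is at most $1$,'' since $\Pr[\poiss(p)=0]=e^{-p}>1-p$; the coupling must occasionally place a discrete point where no continuous one exists, which is why the paper's extra coin appears. The disagreement probability is still $O(p^2)$, so this does not affect the conclusion.

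Where you genuinely diverge is the geometric step. The paper reduces a change in the status of \emph{any} hyperedge to a pairwise event: a hyperedge can appear or disappear only if some \emph{pair} of points starts at distance within $d/N$ of the threshold, and the expected number of such borderline pairs is $O(n^2\cdot 4^{d+1}dr/N)=o(1)$ for every $r\le d$ and every $k$ (and even for $d=d(n)\to\infty$). You instead bound borderline $k$-tuples via the minimum enclosing ball radius. This has two costs. First, your density estimate $O((n\mu)^kR^{d(k-1)-1})$ does not follow from differentiating the upper bound $O((n\mu)^kR^{d(k-1)})$ --- the derivative of an upper bound is not an upper bound on the derivative --- so you would need a direct computation of the volume of the relevant annulus in configuration space, which you defer. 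Second, and more substantively, you evaluate the bound only at $r=\Theta(n^{-1/d})$. Proposition~\ref{discProp} is invoked in the proof of Theorem~\ref{GenSharpLemma}, which must hold at whatever radius puts $\Pr[A]=\alpha$ (e.g., the connectivity radius $(\log n/nV_d)^{1/d}$, or larger), and for such $r$ your $k$-tuple count $O((n\mu)^kr^{d(k-1)-1}/N)$ is no longer $o(1)$ for all $k$ with $N=16^dn^3$. The pairwise reduction closes this gap at no cost, so I would replace your borderline estimate with it; the rest of your argument then goes through.
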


\begin{proof}



We couple as follows: If at least one point with label $l$ falls in the region $A_{i_1, \dots, i_d}$ in the continuous model, let the label $l$ be present on gridpoint $(i_1, \dots, i_d)$ in the discrete model.  If no point with label $l$ falls in $ A_{i_1, \dots, i_d}$ in the continuous model, then flip an independent coin that is heads with probability 
\[ e^{\mu n/ L N^d}\cdot (\mu n/ LN^d - (1- e^{-\mu n/ L N^d})). \]
If the coin is heads, let $l$ be present at $(i_1, \dots, i_d)$.  

The following facts suffice to prove the proposition: 
\begin{itemize}
\item The coupling is faithful: the probability that gridpoint $(i,j)$ has a point with label $l$ is: 
\[ 1 - e^{-\mu n/LN^d} + e^{-\mu n/L N^d} \cdot e^{\mu n/LN^d}\cdot (\mu n/ L N^d - (1- e^{-\mu n/L N^d})) = \mu n /L N^d \] 
and all gridpoints and literals are independent by construction.
\item With probability $1-o(1)$ no coins come up heads: i.e. no extra labeled points appear in the discrete model.  The probability of heads for a single coin is $O((\mu n/ L N^d)^2)$, and there are at most $L N^d$ coins flipped. By the union bound whp no heads are flipped.
\item With probability $1-o(1)$ no two copies of any one label
appear in the same $A_{i_1, \dots, i_d}$.  The probability that label $l$ appears at least twice in a fixed $A_{i_1, \dots, i_d}$ is $O((\mu n/LN^d)^2)$.  There are $N^d$ such boxes and $L$ labels, so again whp no region contains more than one.
\item With probability $1-o(1)$ no hyperedges disappear and no new hyperedges appear, moving from the continuous to the discrete model.  In the coupling a point moves by at most $1/2N$ in each coordinate.  For $l_1, l_2, l_\infty$ norms this means the point moves at most $d/2N$ with respect to the norm.  For a hyperedge to appear or disappear due to this movement, two points would need to begin at a distance $x \in [r - d/N, r + d/N]$. For a given pair of points uniformly distributed in $[0,1]^d$, this occurs with probability that depends on the norm, but is bounded by $4^{d+1}d r/N$.  Since the total number of points has a $\text{Poisson}(n \mu)$ distribution, we can condition, and whp have at most $2n \mu$ points.  Taking the union bound over $\Theta (n^2)$ pairs of points gives a failure probability of $O(n^2 4^{d+1}dr/N) = o(1)$, from our choice of $N$ and using the fact that $r \le d$ and $d^2 \le 4^d$.
\end{itemize}

\end{proof}

To complete the proof of Theorem \ref{GenSharpLemma}, we apply the following theorem from Bourgain's appendix to Friedgut's work \cite{friedgut1999sharp}.  Bourgain's theorem gives a criteria for a monotone property on a product measure over the Hamming cube to have a sharp threshold, as opposed to Friedgut's result which applies only to random graphs and hypergraphs.

Consider a random subset $S\subseteq [n]$ with $i\in S$ with probability $p$, independently for all $1 \le  i \le n$.  Let $A$ be a monotone property of subsets of $[n]$.  (In the case of random graphs $n = \binom N 2$ and $S$ is the set of present edges, $A$ might be the property of having a triangle or connectedness.)

\begin{theorem*}[Bourgain \cite{friedgut1999sharp}]
 Assume that $\Pr_{p}[A] =\alpha \in (0,1)$, $p\cdot d \Pr_p(A)/ d p \le C$ and $p = o(1)$.  Then there exists $\del(C, \alpha)>0$ so that either 
 
(1) the probability that $S$ contains a subset $H$ of constant size with $H \in A$ is greater than $\del$.

or 

(2) there exists a constant-sized subset (e.g. a subgraph in $G(n,p)$) $H \notin A$ so that $\Pr_p[Q | H \subseteq S] > \alpha + \del$.  (I.e. conditioning on the appearance of this constant sized subset increases the probability of the property significantly).   
\end{theorem*}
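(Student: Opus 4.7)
The plan is to follow Bourgain's argument via Fourier analysis on the $p$-biased Hamming cube $\{0,1\}^n$. The starting point is the Margulis-Russo identity, $d\Pr_p[A]/dp = \sum_i \mathrm{Inf}_i^{(p)}(A)$ for monotone $A$, which combined with the hypothesis $p\cdot d\Pr_p[A]/dp \le C$ bounds the weighted sum of influences by $C$. The heuristic content is that $A$ depends essentially on only $O(1)$ coordinates, and the proof makes this precise by a dichotomy that terminates in one of the two stated alternatives.

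First I would perform the Fourier decomposition of $\mathbf{1}_A$ with respect to the $p$-biased measure and invoke hypercontractivity of the noise operator (the $p$-biased Bonami-Beckner inequality); the hypothesis $p = o(1)$ enters here to control the $p$-dependent hypercontractive constants. If alternative (1) fails, so that no constant-size subset $H \in A$ is present in $S$ with probability bounded below by any fixed $\del$, then by monotonicity the $L^2$ mass of the "boundary" Fourier coefficients of $A$ cannot be carried by high-degree levels. Hypercontractivity then forces the mass to concentrate on a coefficient indexed by some set $H^*$ of bounded size; one checks that $H^* \notin A$ (else we would be in case (1)) but nonetheless $\Pr_p[A \mid H^* \subseteq S] \ge \alpha + \del$, which is alternative (2).

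The main obstacle — and the core of Bourgain's contribution — is the quantitative dichotomy: showing that failure of alternative (1) really does force a constant-size conditioning witness, as opposed to some weaker structural statement such as approximability by a junta. The device is an iterative extraction: at each round one either exposes a constant-size subset already in $A$ (case (1)) or identifies a small set whose conditioning boosts $\Pr_p[A]$ by a positive amount (progress toward case (2)). The iteration must terminate in boundedly many rounds depending only on $C$ and $\alpha$, and the delicate part is tracking the constants through the hypercontractive inequality so that the final $\del$ and the witness size remain bounded uniformly in $n$.
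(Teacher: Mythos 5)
This statement is not proved in the paper at all: it is Bourgain's theorem, imported verbatim as a black box from the appendix of \cite{friedgut1999sharp}. The authors only \emph{apply} it (to the discretized product space $\{0,1\}^{LN^d}$ with $p=\mu n/LN^d$); they supply no proof, so there is nothing in the paper to compare your argument against line by line. What you have written is an attempted reconstruction of Bourgain's own proof, and judged on that basis it is a reasonable outline of the right toolkit (Margulis--Russo to convert the derivative bound into an influence bound, $p$-biased hypercontractivity with $p=o(1)$ controlling the constants) but it is not yet a proof.

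The concrete gaps are in the second paragraph. First, hypercontractivity together with the bound $\sum_i \mathrm{Inf}_i \le C/p$ yields concentration of the Fourier weight of $\1_A$ on the \emph{low-degree levels} of the spectrum --- a sum over all sets of bounded size, of which there are $n^{O(1)}$ --- not on ``a coefficient indexed by some set $H^*$ of bounded size.'' Passing from spectral concentration on low levels to the existence of a \emph{single} constant-size witness $H$ with $\Pr_p[A \mid H\subseteq S] \ge \alpha+\del$ is precisely the content of Bourgain's argument, and your sketch asserts it rather than derives it; the parenthetical ``one checks that $H^*\notin A$ (else we would be in case (1))'' conflates a Fourier coefficient being large with the combinatorial event that the set $H^*$, viewed as a substructure, lies in $A$, which are different things. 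Second, the ``iterative extraction'' in your final paragraph is described only at the level of intent --- you do not say what quantity decreases or increases each round, why the increments are bounded below in terms of $C$ and $\alpha$ alone, or why the extracted sets stay of bounded size --- so termination in $O_{C,\alpha}(1)$ rounds is not established. To make this a proof you would need to supply the correlation/conditioning lemma that converts low-degree Fourier mass into a booster set, which is the step Bourgain's appendix actually carries out; alternatively, for the purposes of this paper you should simply cite the theorem as the authors do.
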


We apply this theorem directly to the discrete model above, with the product space $\{0,1\}^{LN^d }$ and $p=\mu n /L N^d$.  A vertex-monotone property on random geometric graphs becomes a monotone property in this hypercube.  Bourgain's theorem is applied as follows: if a property $A$ does not have a sharp threshold, then by the mean value theorem there must be some $\mu$ so that $\Pr_\mu(A) $ is bounded away from $0$ and $1$, and  $ \mu \cdot d \Pr_\mu(A)/ d \mu \le C$, for some constant $C$.  
Then Bourgain's theorem asserts that either condition (1) or (2) must hold. The two conditions are equivalent in the discrete and continuous model since the graphs generated are identical with probability $1-o(1)$. 


\subsection{Proof of Proposition \ref{prop:muRsharp}}

Let $t^* = (\mu^*)^c$.  Fix $\eps >0$. 

 First assume $p> (1+\eps) t^* q(n)$, and let $N = \frac{1}{\mu^*} (1 + \eps/2)^{-c} n$. The conditions of Proposition \ref{prop:muRsharp} say that $\Pr[G_d(N,\mu^*(1+\eps/2)^{c/2}, r(q(N))\in A] = 1- o(1)$.  From the concentration of a Poisson, with probability $1-o(1)$, the number of points drawn in $G_d(N,\mu^*(1+\eps/2)^{c/2}, r(q(N))$ is bounded above by $n$.  We also have 
 \begin{align*}
p &> (1+\eps) (\mu^*)^c q(n)  \\
&=  (1+\eps) (\mu^*)^c \frac{a \log^b n}{n^c} \\
&\ge \frac{a (1+\eps/2) \log^b (n/(\mu^* (1+\eps/2)^{-c}))}{(n/\mu^*)^c} \\
&= q(N)
\end{align*}
Since $A$ is both vertex monotone and edge monotone, we have $\Pr[ G_d(n,r(p))\in A] = 1-o(1)$.  

Next assume $p< (1-\eps) t^* q(n)$, and let $N = \frac{1}{\mu^*} (1 - \eps/2)^{-c} n$. The conditions say that $\Pr[G_d(N,\mu^*(1-\eps/2)^{c/2}, r(q(N))\in A] =  o(1)$.  With probability $1-o(1)$, the number of points drawn in $G_d(N,\mu^*(1-\eps/2)^{c/2}, r(q(N))$ is bounded below by $n$, and 
 \begin{align*}
p &< (1-\eps) (\mu^*)^c q(n)  \\
&=  (1-\eps) (\mu^*)^c \frac{a \log^b n}{n^c} \\
&\le \frac{a (1-\eps/2) \log^b (n/(\mu^* (1-\eps/2)^{-c}))}{(n/\mu^*)^c} \\
&= q(N)
\end{align*}
And again since $A$ is both vertex monotone and edge monotone, $\Pr[ G_d(n,r(p))\in A] = o(1)$.

\subsection{$k$-SAT proofs}

\subsection*{Proof of Theorem \ref{sharpnessthm} }


To prove Theorem \ref{sharpnessthm}, we will assume that the threshold is coarse: i.e., there is some $\alpha \in (0,1)$ so that $\Pr_\mu(\unsat)= \alpha$, for which $\mu\cdot d \Pr_\mu(\unsat)/ d \mu \le C$. It then suffices to rule out both possibilities in Theorem \ref{GenSharpLemma} to derive a contradiction.   We will show:  (1) whp there is no constant-sized set of positioned literals that is by itself unsatisfiable and (2) there is no constant-sized satisfiable `booster', one that boosts the unsatisfiability probability from $\alpha$ to $\alpha +\eps$ when conditioned on.  Using Proposition \ref{ksatthm} (Section~\ref{sec:firstmomSec}) we can assume that $\mu$ is a constant bounded from above and away from $0$ independent of $n$.

\paragraph{Notation:}
We will denote by $F_H$ the $k$-SAT formula generated by a set of positioned literals $H \subset [0,1]^d$.  Let $G_\mu \subset [0,1]^d$ be a random set of positioned literals chosen according to $2n$ independent Poisson processes of intensity $\mu$, one for each of the $2n$ literals: i.e. $F_k(n,\mu)$ has the distribution $F_{G_{\mu}}$.  We will use $l_\infty$ distance to simplify calculations, but everything holds for $l_2$ or $l_1$ distance as well, with $\alpha_d$, the volume of the $d$-dimensional unit ball replacing $2^d$ in the calculations below.

\textit{Condition 1:} For any constant $R$, we show that whp there is no set of $R$ positioned literals that form an unsatisfiable formula.  We will use the \textit{implication graph} of a $2$-SAT formula: the directed graph on $2n$ vertices, each representing a literal in the formula, in which $l_1 \to l_2$ if the clause $(l_2 \vee \overline l_1)$ is in the formula.  
 A \textit{bicycle} (see eg. \cite{chvatal1992mick}) of length $L$ in a 2-SAT formula is a sequence of clauses
\[ (u,w_1), (\overline w_1, w_2), (\overline w_2, w_3), \dots, (\overline w_L, v) \]
where the $w_i$'s are literals of distinct variables and $u, v \in \{w_1, \dots, w_L \} \cup \{\overline w_1, \dots, \overline w_L \}$.  A 2-SAT formula is satisfiable if it does not contain a bicycle.
Let $Y_L$ be the number of bicycles of length $L$ in $F_{G_\mu}$. Then 
\begin{equation}
\E Y_L \le    n^L 2^L (2L)^2 \Pr \left [ (\overline u, w_1),  (w_L, v) \in F_{G_\mu}  \wedge \bigwedge_{i=1}^{L-1} (\overline w_i, w_{i+1}) \in F_{G_\mu} \right ]\,.
\end{equation}

\begin{claim}
\label{muClaim}
%
The probability that a specified bicycle of length $L$ appears in $F_{G_\mu}$ satisfies: 
\[ \Pr \left [ (\overline u, w_1),  (w_L, v) \in F_{G_\mu}  \wedge \bigwedge_{i=1}^{L-1} (\overline w_i, w_{i+1}) \in F_{G_\mu} \right ]  \le \frac {\mu^2 + 3 \mu +1 } {\mu^2} \left( \frac{2^d \mu^2 }{n} \right) ^{L+1} \,, 
\]
where $w_i$'s are literals of distinct variables and $u, v \in \{w_1, \dots, w_L \} \cup \{\overline w_1, \dots, \overline w_L \}$.
\end{claim}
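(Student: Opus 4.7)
My plan is a first-moment argument on witness point-pairs, reducing the estimate to a product of Poisson moments, one per literal.

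For each clause $C$ of the bicycle with literals $(a_C, b_C)$, set
\[ X_C := \#\{(p,q) \in \Pi_{a_C} \times \Pi_{b_C} : \|p-q\|_\infty \le r\}, \]
where $\Pi_l$ denotes the intensity-$\mu$ Poisson process attached to literal $l$. Since $C \in F_{G_\mu}$ iff $X_C \ge 1$, we have $\mathbf{1}[C \in F_{G_\mu}] \le X_C$, hence
\[ \Pr\bigl[\text{all } L+1 \text{ clauses of the bicycle appear}\bigr] \le \E\Bigl[\prod_C X_C\Bigr]. \]
The whole task reduces to bounding $\E[\prod_C X_C]$.

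Let $\deg(l)$ be the number of bicycle clauses containing literal $l$, so $\sum_l \deg(l) = 2(L+1)$. Because the $w_i$'s are literals of distinct variables, any two consecutive middle clauses $(\overline w_{i-1}, w_i)$ and $(\overline w_i, w_{i+1})$ share no literal; hence the only way $\deg(l) \ge 2$ can arise is through the identifications $u \in \{w_j, \overline w_j\}_j$ and $v \in \{w_k, \overline w_k\}_k$. For each literal $l$, condition on $\Pi_l$: the $\deg(l)$ clauses containing $l$ involve pairwise distinct ``other'' literals (again by distinctness of variables), so they are conditionally independent given $\Pi_l$, each satisfying $\E[X_C \mid \Pi_l] \le |\Pi_l| \cdot \mu \cdot (2r)^d = |\Pi_l| \cdot \mu \cdot 2^d/n$. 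Combined with independence of the $2n$ literal Poisson processes, this yields the clean estimate
\[ \E\Bigl[\prod_C X_C\Bigr] \le \prod_l \E\bigl[|\Pi_l|^{\deg(l)}\bigr] \cdot \bigl(2^d/n\bigr)^{L+1}, \]
the product ranging over all literals involved in the bicycle. The one special subcase is when a single clause contains both shared literals (for instance $u = \overline w_1$ and $v = w_2$, making $C_1 = (u,v)$), for which the same bound is obtained by a short direct computation of $\E[X_{C_1} \cdot |\Pi_u| \cdot |\Pi_v|]$.

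The final step is to maximize $\prod_l \E[|\Pi_l|^{\deg(l)}]$ over degree sequences with $\sum_l \deg(l) = 2(L+1)$ and $\deg(l) \ge 1$. For $N \sim \mathrm{Poisson}(\mu)$ the moments are $\E[N^k] \in \{\mu,\ \mu^2+\mu,\ \mu^3+3\mu^2+\mu\}$ for $k=1,2,3$, and the ratio $\E[N^k]/\mu^k$ is strictly increasing in $k$, so concentrating the excess degree into a single literal is worst. Under the bicycle constraints the extremal case is $u = v$ (as literals): a single literal has degree $3$ (sitting in $C_0$, $C_L$, and one middle clause) and the rest have degree $1$, giving $\prod_l \E[|\Pi_l|^{\deg(l)}] = (\mu^3 + 3\mu^2 + \mu)\,\mu^{2L-1} = \mu^{2L}(\mu^2 + 3\mu + 1)$. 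In the alternative case $u \ne v$, two literals have degree $2$ and the product is $(\mu^2+\mu)^2 \mu^{2L-2} = \mu^{2L}(\mu+1)^2 \le \mu^{2L}(\mu^2 + 3\mu + 1)$. Combining,
\[ \E\Bigl[\prod_C X_C\Bigr] \le \mu^{2L}(\mu^2 + 3\mu + 1) \cdot (2^d/n)^{L+1} = \frac{\mu^2 + 3\mu + 1}{\mu^2}\Bigl(\frac{2^d\mu^2}{n}\Bigr)^{L+1}, \]
matching the bound in the claim. The main technical obstacle is verifying the factorization in the edge case where two shared literals sit in a common clause; apart from that subtlety the rest is a clean Campbell-type calculation using independence of the Poisson processes.
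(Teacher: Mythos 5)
Your proof is correct and follows essentially the same route as the paper's: both arguments condition on the Poisson multiplicities of the repeated literals $u,v$, use independence of the remaining clauses across distinct literal processes, and reduce the overlap to the second and third moments of a $\mathrm{Poisson}(\mu)$ variable, handling the same three configurations ($u=v$ giving a degree-$3$ literal; $u\neq v$ in distinct clauses; $u,v$ sharing a middle clause). Your device $\mathbf{1}[C\in F_{G_\mu}]\le X_C$ is a slightly cleaner packaging that turns the paper's asymptotic $\sim$ computations into an exact inequality, but the substance is identical.
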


\begin{proof}
The literals in the above event are not all distinct, and so the clauses are not all independent.  There may be two literals that are repeated as $\overline u$ and $v$, and perhaps $\overline u = v$.  We consider three different cases for the overlapping clauses:

Case 1: $u \ne v$, $(u,v) \ne (\overline w_i, w_{i+1})$ for any $i$.

Say $u= w_i$ and $v= w_j$, though the argument will be the same if either or both is a negation.  For $k \ne i-1$ or $j-1$, the clauses $\overline w_k, w_{k+1}$ are independent of all other clauses in the bicycle.  Each has probability of appearing $\sim 2^d \mu^2/n$ for our choice of $\mu$.  Now consider the pairs of clauses $\{(u=w_i, w_1),(w_{i-1},w_i)\}$ and $\{(w_L,v=w_j), (w_{j-1},w_j)\}$.  The clauses within each pair are not independent, but the pairs are independent of each other.  Both pairs are of the form $(l_1, l_2),(l_1, l_3)$ for distinct literals $l_1,l_2,l_3$.  Conditioning on the number of appearances of $l_1$, we have
\begin{align}
\label{wedgeprob}
\nonumber
\Pr[ (l_1, l_2), (l_1, l_3) \in F] &\sim \sum_{j=0}^\infty \frac{ e^{-\mu} \mu^j}{j!} \left( 1- e^{-{2^d\mu j/n}} \right )^2 \\
\nonumber
&\sim  \sum_{j=0}^\infty \frac{ e^{-\mu} \mu^j}{j!}\frac{2^{2d} \mu^2 j^2}{n^2} \\
&= \frac{2^{2d} \mu^2}{n^2 } \left( \mu + \mu^2 \right) \,.
\end{align}
%



%
All together, with the $L-3$ independent clauses, this gives that a bicycle of this type appears with probability at most
  \[  \left(  \frac{2^{2d} \mu^3(\mu +1)}{n^2} \right)^2 \left ( \frac{2^d \mu^2}{n}  \right) ^{L-3}  =   \frac{(\mu+1)^2}{\mu^2} \left( \frac{2^d \mu^2 }{n} \right) ^{L+1} \,. \]
  

Case 2: $u \ne v$, $(u,v) = (\overline w_i, w_{i+1})$ for some $i$.

For $k\ne i$, the clauses $(\overline w_k, w_{k+1})$ are independent of the other clauses in the bicycle.  What remains is the triple $\{(u=\overline w_i,w_1),(\overline w_i, w_{i+1}),(w_L, w_{i+1})\}$.  (The argument is the same if $u=w_{i+1}$ and $v= \overline w_i$). This triple is of the form $(l_1,l_2),(l_1,l_3),(l_4,l_3)$.  We calculate the probability such a triple appears by conditioning on the number of appearances of $l_1$ and $l_3$:
\begin{align*}
\Pr[ (l_1, l_2), (l_1, l_3),(l_4,l_3) \in F] &\sim \sum_{j=0}^\infty \sum_{k=0}^\infty \frac{ e^{-\mu} \mu^j}{j!} \frac{ e^{-\mu} \mu^k}{k!}  \frac{2^{3d} j^2k^2 \mu^2}{n^3}
\end{align*}
and so 
\begin{align*}
\Pr[ (l_1, l_2), (l_1, l_3),(l_4,l_3) \in F] &\sim \frac{\mu^2}{n^3} \sum_{j=0}^\infty \sum_{k=0}^\infty \frac{ e^{-\mu} \mu^j}{j!} \frac{ e^{-\mu} \mu^k}{k!} j^2 k^2 \\
&= \frac{2^{3d} \mu^2}{n^3} ( \mu+ \mu^2)^2 \\
&= \frac{2^{3d} \mu^4 (\mu+1)^2}{n^3} \,.
\end{align*}
Again all together the probability of the particular bicycle appearing is at most
\[  \frac{2^{3d} \mu^4 (\mu+1)^2}{n^3}  \left ( \frac{2^d \mu^2}{n}  \right) ^{L-2}  =  \frac {(\mu+1)^2} {\mu^2} \left( \frac{2^d \mu^2 }{n} \right) ^{L+1}  . \]
 

 
 
 
Case 3: $u=v$.

Say $u=v= w_i$.  (The same will work for $u=v=\overline w_i$).  The clauses $(\overline w_k, w_{k+1})$ for $k\ne i-1$ are again independent of all other clauses in the bicycle.  What remains are the clauses $(u=w_i, w_1), (\overline w_{i-1},w_i), (w_L,v=w_i)$.  This is a triple of the form $(l_1, l_2), (l_1,l_3), (l_1,l_4)$ and we calculate its probability by conditioning on the number of appearances of $l_1$:

\begin{align*}
\Pr[ (l_1, l_2), (l_1, l_3), (l_1,l_4) \in F] &\sim  \sum_{j=0}^\infty \frac{ e^{-\mu} \mu^j}{j!}\frac{2^{3d} \mu^3 j^3}{n^3} \\
&= \frac{ 2^{3d} \mu^3 }{n^3}(\mu^3+3\mu^2+\mu) = \frac{2^{3d} \mu^4 (\mu^2 + 3 \mu +1)}{n^3} \,.
\end{align*}
%
%
%
%
So the probability of such a bicycle is at most
\[
\frac{2^{3d} \mu^4 (\mu^2 + 3 \mu +1)}{n^3}   \left ( \frac{\al_d \mu^2}{n}  \right)^{L-2} =  \frac {\mu^2 + 3 \mu +1 } {\mu^2} \left( \frac{2^d \mu^2 }{n} \right) ^{L+1} . \]
The three estimates prove the claim.

The three estimates prove the claim.

\end{proof}

Using the claim  and summing from $L=1$ to $R$ yields:
\[ \sum _{L=1}^R \E Y_L \le \sum_{L=1}^R  (2n)^L (2L)^2 \frac {\mu^2 + 3 \mu +1 } {\mu^2} \left( \frac{2^d \mu^2 }{n} \right) ^{L+1}   = O(n^{-1}) \]
for any $\mu, R$ constant with respect to $n$. So whp there is no bicycle in the implication graph of length $\le R$ and thus no set of $R$ literals that form an unsatisfiable formula.  

For $k\ge 3$ consider an arrangement of $R$ literals that yields an unsatisfiable $k$-SAT formula.  The configuration of points would also induce an unsatisfiable $2$-SAT formula since for each $k$-clause, each of the $\binom k 2$ $2$-clauses from the same set of literals would be present, and a satisfying assignment to the $2$-SAT would also satisfy the $k$-SAT formula.  But whp there is no set of $R$ unsatisfiable $2$-SAT literals, and so no set of $R$ unsatisfiable $k$-SAT literals.

\textit{Condition 2:} We want to show that there is no constant-sized set of positioned literals $H$,  so that $F_H$ is satisfiable but conditioning on the presence of $H$ raises the probability of unsatisfiability of $F_{G_\mu}$ from $\alpha$ to $\alpha + \eps$ at the $\mu$ for which $\Pr [F_k(n,\mu)\notin SAT]=\alpha$.  Assume $|H| \le R$.  We will bound the conditional probability:
\[ \Pr[ F_{G_\mu} \notin SAT | H \subseteq G_\mu] \le \Pr[ F_{G_\mu \cup H} \notin SAT] \,.\] 
In other words, we will create a random formula by first placing the literals in $H$ in the cube, then adding each positioned literal independently on top according to a Poisson process of intensity $\mu$, then forming the $k$-SAT formula from the entire set of points.  Note that in the probability on the RHS $H$ is a fixed point set, and $G_\mu$  a random point set that does not depend on $H$. 

We now bound $ \Pr[ F_{G_\mu \cup H} \notin SAT] $.  Let $\mathcal X_H$ be the set of variables of the literals in $H$. By assumption $|\mathcal X_H| \le kR$.  First we show that whp  the subformula of $F_{G_\mu \cup H}$ consisting of clauses entirely from $\mathcal X_H$ is satisfiable.  By assumption, $F_H$ is satisfiable so to create an unsatisfiable subformula on $\mathcal X_H$ we need the addition of $G_\mu$ to add at least one clause with variables entirely in $\mathcal X_H$.  There are two different ways this could happen - either a clause is created entirely with randomly placed literals, or a clause is created with some literals from $H$ and some random literals.  

We bound the expected number of clauses in $F_{G_\mu}$ containing only variables from $\mathcal X_H$, call this $\E Y_{\mathcal X_H, \mu}$, by bounding the number of literals from $\mathcal X_H$ appearing within distance $ n^{-1/d}$  of each other in $G_\mu$:
\[\E Y_{\mathcal X_H, \mu} \le \binom {2kR}{2} \frac{2^d \mu^2}{n} = o(1) \,.\]

Next, we bound the expected number of literals from $\mathcal X_H$ placed by $G_\mu$ within distance $n^{-1/d}$ of a literal in $H$.  The total volume of the cube within distance $n^{-1/d}$ of $H$ is bounded by $2^d k^2R^2 /n$, and so the expected number of literals from $\mathcal X_H$ appearing at random in this region is bounded by $ 2^d k^2R^2 (2kR \mu)/n  = o(1)$.

The remainder of the proof follows the general plan of Section 5 of \cite{friedgut1999sharp}.  We separate the $n$ variables into two sets $\mathcal X_H$ and $\mathcal X_H^c$, and we have shown that whp after the addition of $G_\mu$  there is an assignment to $\mathcal X_H$ that satisfies the subformula of clauses entirely in $\mathcal X_H$, call this assignment $x_H$.  We now show that with probability at least $1- \alpha - \eps /2$, we can extend this assignment on $\mathcal X_H^c$ to satisfy $F_{G_\mu \cup H}$.  The remaining formula consists of two types of clauses: clauses which contain variables from $\mathcal X_H$ (overlapping clauses) and clauses that contain only variables from $\mathcal X_H^c$ (non-overlapping).  With probability at least $1-\alpha$, the set of non-overlapping clauses in $F_{G_p}$ is satisfiable, from the definition of  $\mu$.  We will show that adding the overlapping clauses decreases this probability by at most $\eps /2$.

Step 1: The overlapping clauses created with the addition of $G_\mu$ are dominated (in terms of inducing unsatisfiability) by adding a constant number of independent random unit clauses.

We can assume that $F_H$ is maximal in the sense that it admits exactly one satisfying assignment, $x_H$.  Adding $H$ to $G_\mu$ has two effects: it adds the constraint that $\mathcal X_H = x_H$ and it may create some new clauses involving positioned literals from $H$ and $G_\mu$.  We have shown above that whp these new clauses all contain at least one variable from $\mathcal X_H^c$. 

Consider the following modification of $F_{G_\mu}$: call the set of literals from $\mathcal X_H^c$ that fall within distance $n^{-1/d}$ of a literal from $\mathcal X_H$ (either in $H$ or in $G_\mu$) $L$.  Note that the literals in $L$ are uniformly random over all literals in $\mathcal X_H^c$.  Remove the set  $L$ from  $G_\mu$   to form the random point set $G_\mu^-$.  Create the formula $F_{G_\mu^-}^*$ by forming $k$-clauses according to the usual rules for $G_\mu^-$, but add a unit clause $(l)$ for every literal $l \in L$ that was removed from $G_\mu$.  Critically the $k$-clauses of $F_{G_\mu^-}^*$ are independent of the unit clauses of $F_{G_\mu^-}^*$ since they are formed from points from disjoint regions of the cube.  Note that if there is a satisfying assignment to $F_{G_\mu^-}^*$, then the same assignment satisfies $F_{G_\mu}$.  The inequality goes in the correct way: we progress to a formula which has less probability of being satisfied. 

The expected number of literals from $G_\mu$ that fall within distance $n^{-1/d}$ of a literal in $\mathcal X_ H$ is bounded by $2^d/n \cdot (\mu+1) 2 kR (2n\mu) = 2^{d+2} kR \mu (\mu+1)$, so with probability $1-\eps/4$ the size of $L$ is at most $2^{d+4} kR \mu (\mu+1)/\eps$.

Now consider the random formula $F^\prime $ which is formed by sampling a copy of $F_{G_\mu}$ and adding to it $2^{d+4} kR \mu (\mu+1)/\eps$ independent, uniformly random unit clauses from all $2n$ literals.  With probability $1-o(1)$ this is the same as adding the same number of uniformly random unit clauses chosen from $\mathcal X_H^c$, and $F_{G_\mu}$ stochastically dominates the $k$-clauses of $F_{G_\mu^-}^*$ (formed from a Poisson process on a larger region), so $\Pr[F^\prime \in SAT] \le  \Pr[F_{G_\mu^-}^* \in SAT] + \eps/4  \le  \Pr[F_{G_\mu \cup H}\in SAT]  + \eps /4 + o(1)$.

Step 2: $\Pr[ F^\prime \in SAT] \ge \Pr[ F_{G_\mu} \wedge C_1 \wedge \cdots \wedge C_{\sqrt n} \in SAT]$, where the $C_i$'s are a collection of $\sqrt n$ independent, uniformly random $k$-clauses.  This is Lemma 5.7 from \cite{friedgut1999sharp}.

Step 3: $\Pr[ F_{G_\mu} \wedge C_1 \wedge \cdots \wedge C_{\sqrt n} \in SAT]\ge \Pr[F_{G_\mu \cup G_{\mu_s}} \in SAT]$, where $G_{\mu_s}$ is an independent sprinkling of random positioned literals with intensity $\mu_s = n^{-\del}$ for each of the $2n$ literals.  

We will sprinkle literals independently, adding each literal as a Poisson process of intensity $\mu_s$. Split the cube into $n$ disjoint small cubes with side length $n^{-1/d}$. The probability that a single small cube has at least $k$ sprinkled literals is $\sim (2 \mu_s)^k/k! = 2^k n^{-k \del}/k!$.  The expected number of boxes with $k$ literals is $\Theta(n^{1- k \del})$ and whp there are at least $n^{1-2k\del}$ such boxes.   If we pick one $k$-clause at random from each box that has one, we will get a set of at least $n^{1 - 2k\del}$ uniform and independent random $k$-clauses. Picking $\del = 1/5k$ suffices.

Step 4: Increasing $\mu$ to $\mu^\prime = \mu + \mu_s$ lowers the probability of satisfiability by at most  $Cn^{-\del}=Cn^{-1/5k}$, from the assumption of a coarse threshold (bounded derivative of the probability with respect  to $\mu$, $\mu\cdot d \Pr_\mu(\unsat)/ d \mu \le C$).

All together we have:
\begin{align*}
 \Pr[ F_{G_\mu} \in SAT | H \subseteq G_p] &\ge \Pr[ F_{G_\mu \cup H} \in SAT] \\
 &\ge  \Pr[F_{G_\mu^-}^* \in SAT] +o(1) \\
 &\ge \Pr[F^\prime \in SAT] - \eps /4 +o(1) \\
 &\ge \Pr[ F_{G_\mu} \wedge C_1 \wedge \cdots \wedge C_{\sqrt n} \in SAT] - \eps/4 + o(1)\\
 &\ge \Pr[F_{G_\mu \cup G_{\mu_s}} \in SAT] - \eps/4 +o(1) \\
 &\ge \Pr[F_{G_\mu} \in SAT] - Cn^{- \del} - \eps / 4 + o(1)
\end{align*}
This contradicts condition 2 in Theorem \ref{GenSharpLemma}, leading to the conclusion that the threshold must in fact be sharp.

\section{Clause density}
\label{app:density}

The clause density in each $k$-SAT model is as follows:
\begin{prop}
\label{clausesProp}
The number of clauses in $F_k(n,\gam)$ is  $\frac{ 2^k \gam^{d(k-1)}  k^d }{k!} n + o( n)$ whp. 
The number of clauses in $F_k(n,\mu)$ is $\frac{(2\mu)^k k^d}{k!} n  + o( n)$ whp.
\end{prop}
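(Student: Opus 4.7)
I use the standard first-moment/second-moment method; I spell it out for $F_k(n,\gam)$ and note the change for $F_k(n,\mu)$. Let $X$ denote the number of clauses and write $X=\sum_C \1_C$, where $C$ ranges over $k$-subsets of the $2n$ literals and $\1_C$ is the indicator that the corresponding $k$ positions fit in a common $\ell_\infty$ ball of radius $r=\gam n^{-1/d}$. The geometric input is that ``$k$ uniform points in a common $\ell_\infty$ ball of radius $r$'' factors across coordinates into ``$k$ iid uniforms on $[0,1]$ have range $\le r$''; the one-dimensional probability is $kr^{k-1}-(k-1)r^k = kr^{k-1}(1+o(1))$, so the full common-ball probability is $k^d r^{d(k-1)}(1+o(1))$. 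Substituting $r=\gam n^{-1/d}$ and multiplying by $\binom{2n}{k}\sim (2n)^k/k!$ gives $\E X = \frac{2^k \gam^{d(k-1)} k^d}{k!}\,n(1+o(1))$. In $F_k(n,\mu)$ the indicator $\1_C$ asks that each of $k$ independent Poisson processes of intensity $\mu$ contributes a point to some common ball; Campbell's formula gives $\mu^k k^d r^{d(k-1)}(1+o(1))$ for the expected number of witnessing $k$-tuples, and a first Bonferroni inequality shows this equals $\Pr[\1_C=1]$ to leading order, yielding $\E X = \frac{(2\mu)^k k^d}{k!}\,n(1+o(1))$.

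\textbf{Concentration.} For the whp statement I control $\var X=\sum_{C,C'}\cov(\1_C,\1_{C'})$ and apply Chebyshev. Disjoint pairs contribute $0$ since they use independent randomness. When $|C\cap C'|=j\ge 1$ I use $\E[\1_C\1_{C'}]\le \Pr[\1_C=1]\cdot \Pr[\1_{C'}=1\mid \1_C=1]$; conditional on $C$ being a clause, the $j$ shared positions lie in a cube of side $r$, and each of the $k-j$ extra positions of $C'$ must fall within a region of volume $O(r^d)$ around them. This gives $\E[\1_C\1_{C'}] = O(r^{d(k-1)}\cdot r^{d(k-j)}) = O(n^{-(2k-j-1)})$, which combined with the pair count $\binom{2n}{k}\binom{k}{j}\binom{2n-k}{k-j}=O(n^{2k-j})$ contributes $O(n)$ per $j\in\{1,\dots,k\}$; the diagonal $C=C'$ also contributes $\E X = O(n)$. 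Hence $\var X=O(n)=o((\E X)^2)$ and Chebyshev yields $X = \E X(1+o(1))$ whp.

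\textbf{Main obstacle.} The main bookkeeping subtlety is the Poisson second moment: a shared literal between $C$ and $C'$ may be witnessed by the same point or by two distinct points of its Poisson process, so $\E[\1_C\1_{C'}]$ must be expanded as a sum over such incidence patterns using Campbell's formula. Each pattern either keeps one factor $\mu$ with one spatial integration or replaces them by $\mu^2$ with two integrations, but a case-by-case check shows every contribution remains $O(n^{-(2k-j-1)})$ per pair, so the $O(n)$ variance bound is preserved. Boundary effects at $\partial[0,1]^d$ reduce ball probabilities by $1-O(r)=1-o(1)$ and are absorbed into the $o(n)$ error term.
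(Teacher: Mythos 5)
Your proposal is correct, and up to the expectation step it coincides with the paper's argument: both reduce the common-ball probability for $k$ points to the event that $k$ i.i.d.\ uniforms on $[0,1]$ have range at most $r$ in each coordinate (the paper integrates over the position of the minimum, you quote the range probability $kr^{k-1}-(k-1)r^k$ directly — same $k^dr^{d(k-1)}(1+o(1))$), and both then multiply by $\binom{2n}{k}$, respectively average over the Poisson number of points. Where you genuinely diverge is the concentration step. The paper disposes of the ``whp'' claim in one line by citing Theorem 3.4 of Penrose's book (asymptotic normality of subgraph counts in the RGG, centered at the mean), whereas you run an explicit second-moment computation: covariances vanish for disjoint literal sets, pairs overlapping in $j\ge 1$ literals contribute $O(n^{2k-j})\cdot O(n^{-(2k-j-1)})=O(n)$ each, so $\var X=O(n)=o((\E X)^2)$ and Chebyshev finishes. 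Your route is longer but self-contained, and it has a concrete advantage in the $F_k(n,\mu)$ model, where the process is a superposition of $2n$ labeled Poisson processes and a clause is determined by a set of distinct literals rather than a set of points: Penrose's theorem is stated for unlabeled subgraph counts, so invoking it there requires an (unstated) adaptation, while your covariance bookkeeping — including the same-point versus distinct-point dichotomy for a shared literal, which you correctly identify as the main subtlety — handles the labeled setting directly. What the paper's citation buys is brevity and, as a bonus, a central limit theorem rather than mere concentration.
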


\begin{proof}
Theorem 3.4 of \cite{penrose:book} states that the subgraph count of any fixed-size graph in the RGG converges to a normal distribution centered around the expectation.  To compute the expectation in our case, note that the probability that $k$ given points, distributed uniformly at random in $[0,1]^d$ lie in an $\ell_\infty$-ball of radius $\gam n^{-1/d}$ is the probability that the smallest and largest of $k$ independent uniform $[0,1]$ random variables differ by at most $\gam n^{-1/d}$ raised to the $d$th power.  This probability can be computed by conditioning on the position of the smallest value:
\begin{align}
\label{kcliqueprob}
\nonumber
p_k &= \int_0^1 k (1-t)^{k-1} \min \left\{1, \left ( \frac{\gam n^{-1/d}}{1-t} \right)^{k-1} \right\}  d t  \\
\nonumber
&= k (\gam n^{-1/d})^{k-1} \int_0^{1-\gam n^{-1/d}} d t + k \int_{1-\gam n^{-1/d}}^1 (1-t)^{k-1} d t \\
\nonumber
&= \frac{k r^{k-1}}{n^{(k-1)/d}} \left( 1 -\frac{k-1}{k} \gam n^{-1/d} \right) = \frac{k \gam^{k-1}}{n^{(k-1)/d}} \left(1+o(1)\right) \,.
\end{align}

So in the $F_k(n,\gam)$ model, if $X$ is the number of clauses,
\[ \E X = \binom {2n}{k} p_k^d \sim \frac{ 2^k \gam^{d(k-1)}  k^d }{k!} n \,.\]

In the $F_k(n,\mu)$ model,  the number of points in the cube has a $\poiss(2 \mu n)$ distribution.  Conditioning on $N$, the number of points, we get
\[ \E X= \E \left[ \binom{N}{k} p_k^d  \right ] \sim \frac{(2\mu)^k k^d}{k!} n  \,.  \]
\end{proof}

\section{A coarse threshold for $\tilde F(n, r)$}
\label{sec:coarse}

Here we show that the model $\tilde F(n, r)$ in which variables are placed in $[0,1]^d$ and signs of clauses drawn uniformly at random has a coarse threshold.  

\begin{prop}
\label{coarseprop}
Let $r= \gam n^{-\frac{U(k)}{d(U(k)-1)}}$, where $U(k)$ is the integer function described in Section \ref{proofsec}.  Then
\[ \lim_{n \to \infty} \Pr[\tilde F(n, r) \in SAT] = g(\gam) \]
for a function $g(\gam) \in (0,1)$.  Further, $\lim_{\gam \to 0} g(\gam) =1$ and $\lim_{\gam \to \infty} g(\gam) =0$.  
\end{prop}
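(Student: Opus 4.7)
The plan is to isolate the dominant source of unsatisfiability at this scaling: $U(k)$-clusters of variables inside a single $\ell_\infty$-ball of radius $r$ on which all $\binom{U(k)}{k}$ clauses are generated. Write $U = U(k)$ and let $\beta_k \in (0,1)$ denote the probability that such a complete cluster produces an unsatisfiable subformula under uniform random signs; by the definition of $U$ in Section~\ref{proofsec}, this is the smallest variable support on which positive UNSAT probability is attainable. The radius $r = \gamma n^{-U/(d(U-1))}$ is calibrated exactly so that the expected number of $U$-tuples among the $n$ variables lying in a common ball of radius $r$ converges to the constant $\lambda(\gamma) = \frac{U^d}{U!}\,\gamma^{d(U-1)}$, via the one-dimensional marginal computation already used in Proposition~\ref{clausesProp}.

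First I would invoke Theorem~3.4 of~\cite{penrose:book} to obtain a $\poiss(\lambda(\gamma))$ limit for the number $N$ of such $U$-clusters, noting that the expected number of \emph{pairs} of overlapping $U$-clusters sharing at least one variable is $O(n^{-1/(U-1)}) = o(1)$. Conditional on the spatial configuration, each cluster independently has its $\binom{U}{k}$ signs drawn uniformly, so the number of UNSAT $U$-clusters also converges to $\poiss(\beta_k \lambda(\gamma))$, and the disjointness of clusters whp makes these UNSAT events independent.

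Next I would rule out all other UNSAT sources. Configurations of $U+1$ or more variables in a single ball have expected count $O(n^{-1/(U-1)}) \to 0$; any subformula on fewer than $U$ variables is satisfiable by definition of $U$; and any subformula on exactly $U$ variables whose clauses form a proper subset of $\binom{U}{k}$ is also satisfiable by minimality. Combining these exclusions with the Poisson limit above yields
\[
\lim_{n\to\infty}\Pr\bigl[\tilde F(n,r) \in SAT\bigr] \;=\; e^{-\beta_k \lambda(\gamma)} \;=:\; g(\gamma),
\]
and since $\lambda(\gamma) \to 0$ as $\gamma \to 0$ while $\lambda(\gamma) \to \infty$ as $\gamma \to \infty$, the claimed limits $g(\gamma) \to 1$ and $g(\gamma) \to 0$ follow immediately.

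The main obstacle is the combinatorial exclusion step: verifying that $U(k)$ genuinely captures every constant-sized minimally unsatisfiable pattern realizable in $\tilde F(n,r)$, including potentially sparse patterns spread over several small clusters tied together by shared variables. This requires enumerating minimally unsatisfiable $k$-SAT shapes and checking that each shape not reducible to a single $U$-cluster has strictly smaller expected count under the chosen scaling. This step is parallel in spirit to the exclusion of small unsatisfiable witnesses in the proof of Theorem~\ref{sharpnessthm}, where configurations with fewer than the critical number of literals were shown to have vanishing probability.
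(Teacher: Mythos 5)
Your overall strategy is the same as the paper's: calibrate $r$ so that the number of $U(k)$-variable clusters has a Poisson limit (via Theorem~3.4 of Penrose), use the uniform random signs to get a constant unsatisfiability probability per cluster, rule out components of size $>U(k)$ by a first-moment count, and observe that components of size $<U(k)$ are satisfiable by the minimality of $U(k)$. The exponents check out (the expected number of connected $(U+1)$-sets is $\Theta(n^{-1/(U-1)})$), and the limiting behavior of $g$ as $\gam\to 0,\infty$ follows as you say.

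The genuine gap is the step you yourself flag as the ``main obstacle'': the claim that a component on exactly $U(k)$ variables whose generated clause set is a \emph{proper} subset of $\binom{U(k)}{k}$ is ``satisfiable by minimality.'' Minimality of $U(k)$ only controls formulas on \emph{fewer} than $U(k)$ variables; it says nothing about which clause-support families on exactly $U(k)$ variables can be unsatisfiable. A connected component of $U(k)$ variable-points that does not fit in a single ball still generates a formula with distinct clause supports on $U(k)$ variables, and such a formula can perfectly well be unsatisfiable for some sign choices. Consequently your identification of the UNSAT sources exclusively with complete single-ball clusters, and hence your explicit formula $g(\gam)=e^{-\beta_k\lambda(\gam)}$, is not justified. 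The repair is the one the paper implicitly uses: count \emph{all} unsatisfiable connected components on $U(k)$ variables. Every connected shape on $U(k)$ points contributes at order $n^{U}r^{d(U-1)}=\Theta(\gam^{d(U-1)})$ to the expectation, so the Poisson mean is still a finite sum over shapes of the form $c\,\gam^{d(U-1)}$ with $c$ bounded below by the single-ball contribution and above by the total component count; this gives existence of the limit $g(\gam)$ and the boundary behavior without needing to know which incomplete shapes are unsatisfiable. (A second, minor point: in $\tilde F(n,r)$ each variable is a single point, so there are no ``clusters tied together by shared variables''; the only relevant decomposition is into spatial connected components. Also your constant $\lambda(\gam)$ drops the factor $2^{d(U-1)}$ coming from the ball diameter, but that is immaterial to the statement.)
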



\begin{proof}Let $U(k)$ be the minimal number of variables $u$ so that there exists an unsatisfiable $k$-CNF formula on $u$ variables so that no two clauses share the same set of $k$ variables.  

Claim: $U(k) \le (\ln 2 )^{1/(k-1)} (2k)^{k/(k-1)} $.  In particular, $U(k)$ is finite.  

Proof: Let $u\ge  (\ln 2 )^{1/(k-1)} (2k)^{k/(k-1)} $.  Now consider a random formula formed by taking a clause for each of the $\binom u k$ distinct sets of $k$ variables from the set of variables $x_1, \dots x_u$, and then assigning signs uniformly at random.  The expected number of satisfying assignments is:
\[ 2^u (1- 2^{-k})^{\binom u k} < 1\]
for our choice of $u$ (using basic estimates). So there exists some unsatisfiable formula on $u$ variables in which each clauses has a distinct set of variables. 

Now we show that satisfiability undergoes a coarse threshold at $r= n^{-\frac{U(k)}{d(U(k)-1)}}$.   The general idea of the proof is that for $r =\gam n^{-\frac{U(k)}{d(U(k)-1)}}$, the probability that there is a set of $U(k)$ variables in a ball of radius $r$ is bounded away from $0$ and $1$.  The probability that each such set forms an unsatisfiable formula is also bounded away from $0$ and $1$.  We then show that for this choice of $r$, if there is no such set of variables, the formula is satisfiable whp.  

For $r = \gam  n^{-\frac{U(k)}{d(U(k)-1)}}$  the expected number of sets of $U(k)$ variables that form an unsatisfiable formula tends to a constant as $n \to \infty$.  To see this note that the expected number of sets of $U(k)$ variables that fall in a ball of radius $r$ is a constant, and that any such set of variables is unsatisfiable with probability at least $2^{-U(k)}$ from the definition of $U(k)$.  To see that it is at most a constant, note that the expected number of connected components of $U(k)$ variables is constant.  A modification of Theorem 3.4 of \cite{penrose:book} shows that the number of such unsatisfiable sets of variables has a Poisson distribution asymptotically.  The mean of this Poisson random variable tends to $\infty$ as $\gam \to \infty$ and to $0$ as $\gam \to 0$.   Finally, if there is no such set, then the formula is satisfiable whp, since whp the RGG for this radius consists of connected components of size at most $U(k)$.  For a component of size $< U(k)$, there must be a satisfying assignment, by the definition of $U(k)$. 
\end{proof}

\section{Proof of Theorem \ref{2satthm}}

Unlike in the study random 2-SAT, we must account for dependence between clauses in these models.  Some of the calculations and techniques may be of independent interest to those studying sparse RGG's.  The key part of the proof is that while the structures we analyze with the first- and second-moment methods are long connected components in the implication graph of the formula, they are close to being collections of isolated edges in the graph of literals and clauses, and so we are nearly in the case of independent clauses.  In calculating variances, we must account for more dependence and this is what leads to the bulk of the calculations.

\subsection*{Proof of Theorem \ref{2satthm} for $F_k(n, \gam)$ }

\textit{Lower bound:} As above, we will count bicycles in the implication graph of a $2$-SAT formula, and show whp there are none, for $\gam < 2^{-(1+1/d)} -\eps$. We treat large cycles and small bicycles separately, as in \cite{cooper2002note}.  

Large ($L \ge K \log n $):   Let $X_L$ be the expected number of directed paths of length $L$ with distinct variables in the implication graph. Then
\begin{align*}
\E X_L &\le n^L 2^L \Pr \left [ \bigwedge_{i=1}^{L-1} (\overline w_i, w_{i+1}) \in F  \right ]  = n^L 2^L \Pr[ (\overline w_1 , w_2) \in F] ^{L-1} 
\end{align*}
since for $i\ne j$ the clauses $(\overline w_i, w_{i+1})$ and $(\overline w_j, w_{j+1})$ are made up of four different literals (though if $j= i+1$ the underlying variables might repeat).  So the listed clauses are independent.  The probability of a given $2$-clause $(l_1,l_2)$ being present is $(2 \gam)^d/n$ so $\E X_L \le (2n)^L \left( \frac{(2 \gam)^d}{n} \right) ^{L-1} = 2n \left (2(2 \gam)^d \right)^{L-1}$.  For $\gam < 2^{-(1+1/d)} -\eps$ and $L > K \log n$ for large enough $K(\eps)$, $\E X_L = o(1)$, so whp there are no long bicycles.
 
Small ($L < K \log n $): we will show that whp there is no bicycle of length $\le K \log n$ when $\gam < 2^{-(1+1/d)} -\eps$.  Let $Y_L$ be the number of bicycles of length $L$.  Then
\begin{align*}
\E Y_L &\le    n^L 2^L (2L)^2 \Pr \left [ (u, w_1),  (\overline w_L, v) \in F  \bigwedge \bigwedge_{i=1}^{L-1} (\overline w_i, w_{i+1}) \in F  \right ]  
\end{align*}
where as above $u, v \in \{w_1, \dots, w_L\}$ or their negations.   Unlike above, the clauses in the event in brackets are not made up of entirely distinct literals.  There may be two literals that are repeated as $\overline u$ and $v$, and perhaps $\overline u = v$.  However, the clauses do form a forest in the graph of literals and clauses, and in the $F_k(n,\gam)$ model, as with edges in the RGG, the appearance of 2-clauses in given forest are independent, and so the probability in brackets is $((2 \gam)^d/n)^{L+1}$. All together we have
\begin{align*}
\sum_{L=2}^{K \log n} \E Y_L &\le \frac{2 L^2}{n} (2 (2 \gam)^d)^{L+1}  = O \left( \frac{\log^3 n}{n}  \right)
\end{align*}
for $\gam < 2^{-(1+1/d)} -\eps$ and so whp there are no short bicycles either.

\textit{Upper Bound:} For the upper bound, we use the first- and second-moment methods on the number of \textit{snakes} of length $\log^2 n$ in the random formula.  A \textit{snake} (see \cite{chvatal1992mick}) of length $s = 2t-1$ is a collection of clauses
\[ (w_t, w_1), (\overline w_1, w_2), (\overline w_2, w_3), \cdots, (\overline w_t, w_{t+1}),  \cdots, (\overline w_s, \overline w_t) \]
where the $w_i$'s are literals corresponding to distinct variables.  Note that a snake is unsatisfiable: choosing either $w_t =T$ or $w_t=F$ leads to a chain of implications resulting in a contradiction.  

The structure of a snake is a forest on the graph of literals, and so the clauses are independent in the $F_k(n, \gam)$ model.  The probability a given $s$-snake is present is $ \sim ( (2\gam)^d /n ) ^{s+1}$. Let $X_s$ be the number of snakes of length $s$.  Then $\E X_s \sim  \binom {n}{s} 2^s s! ( (2 \gam)^d/n)  ^{s+1}$, and for $s = \log^2 n$, 
\begin{equation}
\label{xsexp}
\E X_s \sim  (2n)^s \left( \frac{(2 \gam)^d}{n} \right)^{s+1} = \frac{1}{2n} (2 (2 \gam)^d)^{s+1} \to \infty
\end{equation}
since  $\gam >  2^{-(1+1/d)} + \eps$.  A similar calculation works in the $F_k(n, \mu)$ model when $\mu > 2^{-(d+1)/2} + \eps$. 

Next we prove the following:

\begin{prop}
\label{varprop}
For $s = \log ^2 n$ and $\gam > 2^{-(1+1/d)} + \eps$,
\[ var(X_s) = o( (\E X_s)^2) \]
\end{prop}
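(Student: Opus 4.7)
The plan is to bound the second moment $\E X_s^2 = \sum_{\sigma,\sigma'} \Pr[\sigma \text{ and } \sigma' \text{ both present}]$ by splitting the sum according to the overlap structure of the two snakes, parameterized by the number $a$ of literals shared and the number $b$ of clauses shared. For pairs with $a = 0$ (disjoint literal sets), the events $\{\sigma \text{ present}\}$ and $\{\sigma' \text{ present}\}$ depend on disjoint sets of independent uniform positions in the $F_2(n,\gamma)$ model and are therefore independent, contributing $(1+o(1))(\E X_s)^2$ to the second moment. The remaining task is to show that the sum over pairs with $a \ge 1$ is $o((\E X_s)^2)$.

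For a fixed $(a,b)$ overlap, the combined clause set $\sigma \cup \sigma'$ contains $2(s+1) - b$ distinct clauses on $2s - a$ distinct literals. Provided this union remains a forest in the bipartite literal-clause graph, the same observation used to derive $\E X_s$ in~(\ref{xsexp}) gives joint probability $((2\gamma)^d/n)^{2(s+1)-b}(1+o(1))$, while the number of pairs realizing this overlap is at most $|X_s| \cdot s^{O(1)} \cdot (2n)^{s-a}$, where the polynomial-in-$s$ factor accounts for placements of the shared substructure in each snake. Dividing by $(\E X_s)^2 \sim (2n)^{2s}((2\gamma)^d/n)^{2(s+1)}$, the relative contribution of this overlap type is at most $s^{O(1)} (2n)^{-a} ((2\gamma)^d/n)^{-b}$. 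Using the structural constraint $a \ge b + c$, where $c \ge 1$ is the number of connected components of the shared clause set in the bipartite graph, this becomes $s^{O(c)} (2n)^{-c} (2(2\gamma)^d)^{-b}$. The hypothesis $\gamma > 2^{-(1+1/d)} + \eps$ gives $2(2\gamma)^d > 1$, so summing over $b \ge 0$ yields a convergent geometric series and summing over $c \ge 1$ (and over the finitely many $s^{O(1)}$ placements) contributes at most $O(\mathrm{polylog}(n)/n) = o(1)$.

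The main technical obstacle is handling overlap patterns for which $\sigma \cup \sigma'$ is not a forest in the bipartite literal-clause graph, since then the product-over-clauses probability estimate is no longer tight. However, any cycle in the union forces additional shared literals (tightening $a$), so one can pass to a spanning subforest of the combined clause set to recover the forest-based probability bound at the cost of dropping a bounded number of clauses per cycle, which is absorbed into the $s^{O(1)}$ counting factor; the number of non-forest overlap patterns is itself polynomial in $s$. Combining the disjoint contribution $(1+o(1))(\E X_s)^2$ with the overlap bound $o((\E X_s)^2)$ yields $\E X_s^2 = (1+o(1))(\E X_s)^2$, equivalently $\var(X_s) = o((\E X_s)^2)$, as claimed.
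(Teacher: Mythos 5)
Your overall skeleton is the same as the paper's: a second--moment computation in which pairs of snakes are classified by their overlap, the joint probability of a pair is controlled via the fact that literal-disjoint clauses are independent in $F_2(n,\gamma)$ (so a forest of clauses in the literal--clause graph has probability equal to the product over its clauses), and the condition $2(2\gamma)^d>1$ makes the resulting geometric series over shared clauses converge. Your structural inequality $a\ge b+c$ correctly handles the case where the union of the two snakes is a forest, and the $a=0$ independence observation is fine.

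The gap is in the sentence where you dispose of the non-forest case. When the union of the two snakes contains $j$ independent cycles in the literal--clause graph, passing to a spanning subforest drops $j$ clauses, and each dropped clause weakens the joint-probability bound by a factor of $n/(2\gamma)^d=\Theta(n)$ --- a total loss of $\Theta(n^j)$. This is not a quantity that can be ``absorbed into the $s^{O(1)}$ counting factor.'' It must be paid for by showing that the $j$ cycles force enough \emph{additional} shared literals beyond the $b+c$ already accounted for: since each extra shared literal improves the pair count by only a factor of $O(s^2/n)$, you need, quantitatively, at least one extra shared literal per cycle (and in fact the borderline case requires $j+1$ extra overlapping variables for $j$ cycles when no clauses are shared). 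Establishing this is the entire difficulty of the proof: the dangerous configurations are the length-$3$ ``hinge'' cycles through the degree-two literals $w_t,\overline w_t$ of a snake, which need only two overlapping variables each and can share overlapping variables with one another and with the endpoints of a shared run, as well as single shared runs of length $\ge t$ that wrap through $w_t$ and involve fewer distinct variables than their length would suggest. The paper's Claim 2 is a multi-page case analysis (over the number of hinge cycles $r$ versus $j$, the number and shapes of runs, and the regimes $i\le t-1$ versus $i\ge t$) devoted precisely to verifying this exchange rate; your proposal asserts the conclusion of that analysis without an argument. A secondary, repairable issue: the number of snakes $\sigma'$ realizing a given shared substructure is $s^{O(m)}(2n)^{s-a}$ where $m$ is the number of maximal shared runs (each run's placement and orientation in $\sigma'$ must be chosen), not $s^{O(1)}(2n)^{s-a}$; since $m\le a$ and $s^{O(1)}/n=o(1)$ this can be folded into the $(2n)^{-a}$ savings, but it should be stated.
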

From (\ref{xsexp}) and Proposition \ref{varprop} we conclude that $X_s \ge 1$ whp and thus $F_k(n, \gam)$ is unsatisfiable whp.

\textit{Proof of Proposition \ref{varprop}}: Let $A$ and $B$ be two $s$-snakes, and say $A$ and $B$ overlap in $i$ clauses and their union forms $j$ cycles in the graph of literals and clauses.  Then the probability that snakes $A$ and $B$ are both present in the random formula is bounded above by $((2 \gam)^d/n)^{ 2(s+1)-i-j  } $ since the union of the two snakes has $2(s+1) -i$ clauses and we can remove $j$ clauses to form a forest, then use the fact that clauses in a forest are independent.  Now consider an arbitrary snake $A$ and a random snake $B$.  We bound the probability that $A$ and $B$ overlap in a given way:

\begin{claim}
\label{overlapclaim1}
Fix an $s$-snake $A$ and choose $B$ uniformly at random from all $s$-snakes. Let $p_{ij}$ be the probability that $A$ and $B$ share $i$ clauses and their union forms $j$ cycles. Then

\[ p_{0,j} \le 2^{10} s^{10}  \left(\frac{1}{2n-2s} \right )^{j+1} , \]
for $1\le i \le t-1$ 
\[ p_{i,j} \le 2^{10} s^{15}  \left (\frac{1}{2n -2s} \right) ^{i+j+1}   \,, \]
and for $i \ge t$,
\[ p_{i,j} \le2^{5} s^{4}  \left (\frac{1}{2n -2s} \right) ^{i+j-6}   \,.  \] 
\end{claim}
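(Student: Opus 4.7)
The plan is to bound, for each overlap type $(i,j)$, the number of $s$-snakes $B$ exhibiting that overlap with the fixed snake $A$, then divide by the total number of snakes $\prod_{k=0}^{s-1}(2n-2k) \ge (2n-2s)^s$. The key observation is that each literal position $b_k$ of $B$ that is forced to coincide with a specific literal of $A$ costs a factor of roughly $1/(2n-2s)$ in the probability, since a uniformly random snake draws each literal from essentially the pool of unused literals. So the core task is to identify the minimum number of forced positions in $B$ compatible with an $(i,j)$-overlap, and bound the combinatorial overhead of choosing which positions and signs they take.

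For $i = 0$ (no shared clauses), the $j$ cycles of the union multigraph (with literal-vertices and clauses as edges) arise purely from shared literal-vertices. Each snake is a forest with $s-1$ connected components, so the first shared literal merges two of these components without creating a cycle, and each subsequent shared literal either further merges components or closes a cycle; creating $j$ cycles therefore requires at least $j+1$ shared literals, yielding the $(1/(2n-2s))^{j+1}$ factor. For $1 \le i \le t-1$, the $i$ shared clauses already contribute $i$ cycles as multi-edges and force at least $i+1$ positions of $B$ to match specific literals of $A$; the remaining $j-i$ cycles require $j-i$ additional shared literal positions, for a total of at least $i+j+1$ forced positions. The case $i \ge t$ is special because the overlap must reach across the snake's center, where $w_t$ and $\overline w_t$ each appear in two clauses: the palindromic structure of the snake around this pivot makes some of the would-be independent constraints coincide, so the minimum number of independent forced positions can be smaller than $i+j+1$ by up to a constant, yielding the looser $i+j-6$ exponent. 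In each case, the polynomial factors $s^{10}, s^{15}, s^{4}$ bound the overhead of choosing which clauses or literals of $A$ and $B$ align, which is always polynomial in $s$ and independent of $j$ because the rigid snake structure admits only $O(s)$ alignment freedom per shared block and per cycle-closing literal.

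The main obstacle is the careful case analysis for $i \ge t$: one must enumerate the ways in which $B$'s pivot $w_t^{(B)}$ can align with central literals of $A$ (e.g., with $w_t^{(A)}$, $\overline w_t^{(A)}$, or an interior $w_k^{(A)}$), and in each branch track which literal positions of $B$ are forced and which remain free, without double-counting shared vertices that arise both from shared clauses and from cycle-closing constraints. Secondary care is needed in the range $1 \le i \le t-1$ when a shared clause sits at a pivot of $A$ or $B$ (so the adjacent path-of-length-$2$ in the literal graph creates an extra forced position), but these boundary cases contribute only lower-order corrections that are absorbed by the polynomial factor $s^{15}$. Once the accounting in each branch is verified, summing the polynomial overhead and the constraint factor gives the three claimed bounds.
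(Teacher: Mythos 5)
Your overall framework (count the snakes $B$ realizing a prescribed overlap pattern with $A$, charge a factor of roughly $1/(2n-2s)$ per forced position of $B$, absorb alignment choices into a polynomial in $s$) is the same as the paper's, which bounds the probability of $l$ overlapping variables by $(s^2/(n-s))^l$. The gap is in how many positions are actually forced. Your merging argument for $i=0$ operates on the literal graph: each identification of a literal-vertex of $A$ with one of $B$ either merges two components or closes at most one cycle, so $j$ cycles need at least $j+1$ identifications. But the factor $1/(2n-2s)$ is earned once per shared \emph{variable}, and a single shared variable $x_m = \pm w_k$ performs \emph{two} vertex identifications in the union of the literal forests at once ($x_m$ with $\pm w_k$ and $\overline x_m$ with $\mp w_k$, and these lie in different components of each snake's forest, so each identification can independently close a cycle). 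Hence $j+1$ identifications force only about $(j+1)/2$ variables, and your argument yields $p_{0,j} \le \mathrm{poly}(s)\,(1/(2n-2s))^{\lceil (j+1)/2\rceil}$, not the claimed exponent $j+1$. This is not a cosmetic loss: in the variance computation each factor of $1/(2n-2s)$ must beat a factor of $n/(2\gamma)^d$, and with the halved exponent the sum over $j\ge 2$ diverges. Closing this factor of two is essentially the entire content of the paper's proof of the claim: it classifies cycles as $3$-cycles through a ``hinge'' (the two length-two paths at $w_t$ and $\overline w_t$), of which there are at most four and which carry only two overlapping literal-vertices each, versus even cycles of length at least four carrying four overlapping literal-vertices, and then runs an explicit case analysis for $j=1,2,3,4$ in the all-hinge-cycle situation to show that $j+1$ distinct overlapping variables are still required.

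The same issue propagates to $1\le i\le t-1$: one must show that each of the $j$ cycles forces a shared variable \emph{not already among} the $i+k$ variables forced by the $i$ shared clauses in $k$ runs, which the paper establishes by an index-separation argument (the two overlapping literals of a hinge cycle have indices differing by $t-1$ in both snakes, whereas the endpoints of a run of length at most $t-1$ are closer together). Your arithmetic here also does not reach the stated exponent under a consistent definition of $j$: if the $i$ shared clauses are counted as $i$ multigraph $2$-cycles contributing to $j$, then $(i+1)+(j-i)$ forced positions is $j+1$, not $i+j+1$; if they are not (the paper's convention, since its probability bound is $((2\gamma)^d/n)^{2(s+1)-i-j}$), then all $j$ cycles need the per-cycle-one-variable argument and you are back to the factor-of-two problem. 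Your reading of the $i\ge t$ case (savings coming from the pivot at $w_t$, $\overline w_t$) is qualitatively correct but is left as a sketch, so as written the proposal does not establish any of the three bounds.
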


\begin{proof}
This claim is similar to (8) and (9) in \cite{chvatal1992mick}, but we need more precision to count overlapping literals as well as clauses.  Denote the clauses of $A$ by 
\[ (x_1, x_t), (\overline x_1, x_2), (\overline x_2, x_3), \dots, (\overline x_{s-1}, x_s), (\overline x_s, \overline x_t)\]
 and the clauses of $B$ by 
 \[ (w_1, w_t), (\overline w_1, w_2), (\overline w_2, w_3), \dots, (\overline w_{s-1}, w_s), (\overline w_s, \overline w_t). \]
   We will call each of the four two-paths a \textit{hinge}.

First we consider the case $i=0$.  The union of $A$ and $B$ has at most 4 cycles involving a hinge from $A$ or $B$, and these cycles can have length as small as 3 and as few as 2 overlapping literals.  All other cycles in the union must be even cycles of length at least 4 with at least 4 overlapping literals.   Let $r$ be the number of cycles in the union with a hinge, and $j-r$ the number of additional cycles.  One bound on the probability is to simply count the number of variables that must overlap, which is $(4(j -r) +2r)/2 = 2j-r$ (dividing the number of overlapping literals by $2$).   The probability of $l$ variables overlapping between $A$ and $B$ is bounded above by $( s^2/(n-s))^{l}$, and so the claim follows if $r<j$, and using the fact that $r \le 4$.  Now if $r=j$, i.e. all cycles in the union involve a hinge, we show that the number of overlapping variables is at least $j+1$. Clearly this is true if $j=1$: two variables must overlap in a cycle. 

 Now if $j=2$, and both hinges come from $A$ or both from $B$, the variables that need to be joined to form a cycle are different in the two hinges: $\{ x_{t-1} , x_1\}$ and $\{x_{t+1},  x_s\}$, so we have at least $4$ overlapping variables.  Now if one hinge is from $A$ and the other from $B$, we can check that at most one variable can overlap, unless the cycle is actually the same.  E.g., consider the hinge $(\overline w_{t-1}, w_t, w_1)$ in $B$ and $(x_{t+1}, \overline x_t, \overline x_s)$ and say that the common overlapping variable is $x_{t+1} = \pm w_{t-1}$.  If $x_{t+1} = w_{t-1}$ then we need $\overline x_s = \overline w_{t-2}$ to complete the cycle, but $w_{t-2}$ is not part of the hinge from $B$.  If $w_{t+1} = \overline w_{t-1}$, then we need $\overline x_s = w_t$ to complete one cycle and $w_1 = \overline x_t$ to complete the other, but now they are exactly the same 3-cycle - all 3 edges are shared.  The other cases are similar.  
 
 For $j=3$, we note that at least two cycles must come from the same snake, so there are $4$ distinct overlapping variables between them.  And between one snake from $A$ and one from $B$, as in the example above, if one overlapping variable is shared there must be another variable that overlaps outside of the set of $\{ w_1, w_{t-1}, w_t, w_{t+1}, w_s \}$ (or the respective $x$ variables).  Thus the three cycles must in fact have $5$ distinct overlapping variables.  
  
  Finally if  $j=4 $, we must have at least $6$ overlapping variables.  Two cycles from the same snake give 4 distinct variables; each cycle from the other snake introduces an additional variable not in either other snake.  

   All together, this gives that when $r=j$, $p_{0,j} \le ( s^2/(n-s))^{j+1} \le 2^{10} s^{10}  \left(\frac{1}{2n-2s} \right )^{j+1}$ as needed.

Now we consider $i \ge 1$.   We will say a clause $(\overline x_i, x_{i+1})$ from $A$ overlaps \textit{positively} with $B$ if $x_i = w_j$ and $x_{i+1} = w_{j+1}$ for some $j$.  We say the clause overlaps \textit{negatively} if $x_i = \overline w_{j+1}$ and $x_{i+1} = \overline w_j$ for some $j$.  An overlapping \textit{run} of length $r$ will be a maximal sequence of $r$ consecutive overlapping clauses so that back-to-back clauses share a variable; e.g. $(\overline x_i, x_{i+1}), (\overline x_{i+1}, x_{i+2}), \dots, (\overline x_{i+r}, x_{i+r+1})$ that each overlap.  Note that an overlapping run cannot have both negative and positive overlaps, so we can assign each run an orientation.  There are additional possible runs that include $x_t$ or $\overline x_t$, e.g. $(x_t,x_1), (\overline x_1, x_2), \dots$ or $\dots, (\overline x_{s-1}, x_s), (\overline x_s, \overline x_t), (x_t, x_1), \dots$ and at most one of these special runs can branch at $x_t$ and form an `X' or `Y' shape (only if $x_t =w_t$ or $x_t = \overline w_t$).  The following facts can be easily checked:
\begin{itemize}
\item Two distinct runs consist of distinct variables.   

\item Any run of length $l \le t-1$ must involve $l+1$ variables.  

\item $A$ and $B$ may overlap in a single run of length $l \ge t$ that involves $l$ or $l-1$ variables.  For such a run we must have $x_t = w_t$ or $x_t =\overline w_t$.  
\end{itemize}

Now we can compute the probability that $A$ and $B$ overlap in $i$ clauses divided into $k$ runs.  For $i\le t-1$, we have
\begin{align*}
\Pr[ i \text{ clauses in } k \text{ runs} ] \le (s^3)^k 2^4 2^k  \left (\frac{1}{2n -2s} \right) ^{k+i}  
\end{align*}
where the first factor bounds the length and position in $A$ and $B$ of each run, the next factor counts the number of possible ways to branch at $x_t$ or $w_t$ (or if there is an `X' or `Y' shaped run, the ways to choose the active branches), the next factor counts the number of ways to assign an orientation to each run, and the final factor accounts for the probability of the chosen $w_j$ to match the chosen $x_i$.  For $i \ge t$ we must account for the possible special run, and so we have
\begin{align*}
\Pr[ i \text{ clauses in } k \text{ runs} ] \le (s^3)^k 2^4 2^k  \left (\frac{1}{2n -2s} \right) ^{k+i-2}  
\end{align*}

Now we account for the $j$ cycles.   We start with the case $k=1$ and $i\le t-1$.   This single run generates 2 additional overlapping literals (or 3 or 4 if the run is Y or  X-shaped, but in this case the cycles cannot be hinge cycles - the clauses in the hinges overlap already). We start with $j=1$.  If the run is $X$ shaped, the cycle must be of length at least $4$ with $4$ overlapping literals - but these cannot be the same set of $4$ left over form the run: since $i\le t-1$, the indices of the literals from the ends of the run cannot differ by $1$, but in a cycle they do.  So there is at least one additional overlapping variable, and we get a factor of $(s^2/(n-s))$ in the probability.  If the run is not $X$ or $Y$ shaped, we check two cases: if the cycle has four overlapping literals, we get a factor of $(s^2/(n-s))^2 \le s^4 2^2 (1 / (2n -2s))^2$ from the two that cannot be accounted for from the run.  If the cycle is a 3-cycle from a hinge, then we note that at least one of the literals is not from an end of the run - the overlapping literals in a hinge cycle are separated by $t-1$ in both the index from $A$ and the index from $B$, but since $i\le t-1$, the overlapping literals from the run are closer together in both indices.  So again we get a factor of $(s^2/(n-s))$.  

For $j=2$, if either cycle is a 4-cycle, then as above we get a factor of $(s^2/(n-s))^2$.  If both are 3-cycles, they have at least 3 overlapping variables between them.  Each pair of these variables differs by at least $t-2$ in one of the indices.  Since $i \le t-1$, the overlapping end variables from the run cannot include two of these variables, and we get a factor $(s^2/(n-s))^2$.

For $j=3$,  if all cycles are 3-cycles, then there are at least $5$ overlapping variables (see the $j=3, i=0$ analysis) (if one is a 4-cycle we have even more).  At most two may come from the run.  This gives a factor $(s^2/(n-s))^{3} \le 2^{3} s^{6} (1/ (2n -2s))^j$.  

For $j=4$, if all cycles are 3-cycles, then there are at least $6$ overlapping variables.  At most two may come from the run.  This gives a factor $(s^2/(n-s))^{4} \le 2^{4} s^{8} (1/ (2n -2s))^j$.  

Now for $j >4$, we can simply count overlapping literals.  There are at least $4j -8$, and two may come from the run, for a total of $4j -10$, or at least $2j-5$ additional overlapping variables.  This gives a factor $(s^2/(n-s))^{2j-5} \le 2^{5} s^{10} (1/ (2n -2s))^j$.  Overall for $k=1$ we have
\begin{align*}
\Pr[ i \text{ clauses in one run with } j \text{ cycles} ] \le s^{13} 2^{10}  \left (\frac{1}{2n -2s} \right) ^{i+j+1}   
\end{align*}

Now for $k=2$, if $j=1$, we just use the bound above, $s^6 2^6  \left (\frac{1}{2n -2s} \right) ^{i+j+1}$, without an additional factor.  If there is at least one 4-cycle, and at least two cycles, then we have at least $j-1$ additional overlapping variables for $j \le 5$ and at least $2j - 5$ additional for $j >5$, which gives a factor $(s^2/(n-s))^{j-1} $ and $ (s^2/(n-s))^{2j-5}$ respectively, which are both $\le 2^4   s^8 (1/ (2n-2s))^{j-1}$.

For $k=2 , j\ge2$, with two hinge cycles, note that only one variable at the end of a single run of length $\le t-1$ can appear in a hinge cycle.   This means we have at least one additional overlapping variable when $j=2$, three extra overlapping when $j=3$, and four extra overlapping when $j=4$, and with $l$ overlapping variables we gain a factor of $(s^2/(n-s))^l$.  For $j=2, 3,4$ the factor is bounded by $\ 2^4 s^8 (1 / (2n -2s))^{j-1}$.  

All together for $k=2$, we have
\begin{align*}
\Pr[ i \text{ clauses in two runs with } j \text{ cycles} ] \le s^{14} 2^{10}  \left (\frac{1}{2n -2s} \right) ^{i+j+1}   
\end{align*}

$k \ge 3$ proceeds similarly, but now we need only a factor of $(1/ (2n-2s))^{j-2}$.  

  Together we have for $i \le t-1$,

\begin{align*}
\Pr[ i \text{ clauses in } k \text{ runs with } j \text{ cycles} ] \le s^{14}2^{10}  \left (\frac{1}{2n -2s} \right) ^{i+j+1}  
\end{align*}

For $i \ge t$, we only need a rough bound.  Each hinge cycle has at least $2$ overlapping literals and all other cycles have at least $4$.  Each of the $k$ runs can contribute up to $2$ of these literals.  So the total number of additional overlapping literals needed is at least $4j -8 - 2k$ (or $0$ if that is negative), and the total number of additional overlapping variables is at least $2j -4 -k$ (when $k \le 2j- 4$).  This gives a factor of $( s^2/(n-s))^{2j-4-k}$, and so when $k \le 2j-4$,
\begin{align*}
\Pr[ i \text{ clauses in } k \text{ runs with } j \text { cycles} ] &\le s^{4+3k}  2^{6+k}  (2s^2)^{2j - 4k}  \left (\frac{1}{2n -2s} \right) ^{i+2j-6}   \\
&\le  \left (\frac{1}{2n -2s} \right) ^{i+j-6} 
\end{align*}
where the constants in front have been absorbed by the factor $(1/(2n-2s))^j$.  When $k \ge 2j-4$, we have from above
\begin{align*}
\Pr[ i \text{ clauses in } k \text{ runs} ] &\le (s^3)^k 2^4 2^k  \left (\frac{1}{2n -2s} \right) ^{k+i-2}  \\
&\le  2^5 s^3   \left (\frac{1}{2n -2s} \right) ^{i+j-6}
\end{align*}

Now we sum up over all choices of $k$, from $1$ to $s$. For $i \le t-1$:
\begin{align*}
\Pr[ i \text{ clauses with } j \text{ cycles} ] \le s^{15}2^{10}  \left (\frac{1}{2n -2s} \right) ^{i+j+1}  
\end{align*}
and for $i \ge t$, we can bound the sum by
\begin{align*}
\Pr[ i \text{ clauses  with } j \text{ cycles}] \le 2^5 s^4   \left (\frac{1}{2n -2s} \right) ^{i+j -6}  \,.
\end{align*}
\end{proof}

To complete the proof of Theorem \ref{2satthm} we bound $\E (X_s^2)$ in terms of $(\E X_s)^2$:
\begin{align*}
\E (X_s^2) &= \sum_{\text{snakes } A, B} \Pr[A, B \in F_k(n,\gam)] \,
\end{align*}
and
\begin{align*}
(\E X_s)^2 &= \sum_{\text{snakes } A, B} \Pr[A \in F_k(n,\gam)] \Pr[B \in F_k(n,\gam)] =  \sum_{\text{snakes } A, B} \left( \frac{ (2 \gam)^d}{n} \right)^{2(s+1)} \,.
\end{align*}
And so,
\begin{align}
\label{pijeq}
\nonumber
\E (X_s^2) &\le (\E X_s)^2 \sum_{i \ge 0, j \ge 0} p_{ij} \left( \frac{n}{(2 \gam)^d} \right)^{i+j}   \\
\nonumber
&\le  (\E X_s)^2  \left( 1+ \sum_{i=0, j\ge 1} 2^{10} s^{10}  \left(\frac{1}{2n-2s} \right )^{j+1}  \left( \frac{n}{(2 \gam)^d} \right)^{j }  \right .\\
\nonumber
& \left. +  \sum_{ i \le t-1 \atop i+j \ge 1}  2^{10} s^{15}  \left (\frac{1}{2n -2s} \right) ^{i+j+1}   \left( \frac{n}{(2 \gam)^d} \right)^{i+j} \right. \\
\nonumber
& \left. + \sum_{i \ge t, j} 2^{5} s^{4}  \left (\frac{1}{2n -2s} \right) ^{i+j-6}  
 \left( \frac{n}{(2 \gam)^d} \right)^{i+j}   \right)  \\
 \nonumber
&=  (\E X_s)^2 (1+o(1)) \text{ for } \gam> 2^{-(1+1/d)}+ \eps \,.
\end{align}
where we have repeatedly used the fact that $(2 \gam)^d > 2+\eps$.  This proves Proposition \ref{varprop}. An application of Chebyshev's inequality then proves that $X_s \ge 1$ whp.

\subsection*{Proof of Theorem \ref{2satthm} for $F_k(n,\mu)$}

\textit{Lower Bound:} The proof is similar for the $F_k(n, \mu)$ model, with the only difference being accounting for multiple occurrences of the same literal.  The probability of a given $2$-clause $(l_1,l_2)$ being present is $\sim 2^d \mu^2 /n$.  If $X_L$ is the number of directed paths in the implication graph of length $L$, then 
\[ \E X_L \le (2n)^L \left( \frac{2^d \mu^2}{n} \right) ^{L-1} = 2n \left ( 2^{d+1} \mu^2 \right)^{L-1} \]
and for $\mu < 2^{-(d+1)/2} - \eps$ and $L > K \log n$ for large enough $K$, $\E X_L = o(1)$.  

Now we show that whp there is no bicycle of length $\le K \log n$ when $\mu < 2^{-(d+1)/2} - \eps$.  Let $Y_L$ be the number of bicycles of length $L$.  Then
\begin{equation*}
\label{EYLeq}
\E Y_L \le    n^L 2^L (2L)^2 \Pr \left [ (\overline u, w_1),  (w_L, v) \in F  \wedge \bigwedge_{i=1}^{L-1} (\overline w_i, w_{i+1}) \in F  \right ] 
\end{equation*}
where as above $u, v \in \{w_1, \dots, w_L\}$ or their negations.  From Claim \ref{muClaim} above, we have
\[ \E Y_L \le    n^L 2^L (2L)^2    \frac {\mu^2 + 3 \mu +1 } {\mu^2} \left( \frac{2^d \mu^2 }{n} \right) ^{L+1} \,.\]
For $\mu <2^{-(d+1)/2}$, 
\begin{equation*}
\label{MuSmallEq}
\sum_{L=1}^{K \log n} \E Y_L  \le  \sum_{L=1}^{K \log n} \frac {\mu^2 + 3 \mu +1 } {\mu^2} \frac{(2L)^2}{2n}  = O \left( \frac{ \log^3 n}{n} \right) = o(1) \,.
\end{equation*}
Thus whp there are no bicycles in the implication graph for $\mu < 2^{-(d+1)/2} - \eps$, and so $F_k(n, \mu) \in SAT $ whp.

\textit{Upper Bound:}  Again we compute the expectation and variance of $X_s$, the number of $s$-snakes, for $s = \log^ 2 n$.  Using (\ref{wedgeprob}), we have
\begin{align*}
\E X_s &=  \binom n s  s! 2^s \Pr[ (l_1, l_2) \in F_k(n, \mu)]^{s-3} \cdot \Pr[ (l_1,l_2), (l_1, l_3) \in F_k(n,\mu)]^2 \\
& \sim (2n)^s \left( \frac{2^d \mu^2}{n} \right) ^{s-3}  \left(\frac{2^{2d} \mu^2 ( \mu + \mu^2)}{n^2}   \right) ^2 \\
&= \frac{2^{2d+1} (\mu +\mu^2)^2}   {n}  \left( 2^{d+1} \mu ^2  \right) ^{s-1} 
\end{align*}
which tends to $\infty$ as $n \to \infty$ for $s= \log^2 n$ and $\mu > 2^{-(d+1)/2} + \eps$.

To bound the variance of $X_s$, we proceed as above, but we need to account for the fact that clauses in a tree are not independent in this model.   

\begin{claim}
Let $T$ be a set of clauses that form a tree in which at most 2 literals have degree 4, at most 4 have degree 3, and the rest have degree 2 or 1.  If $T$ has $q = O(\log^2 n)$ clauses then the probability that all are present in $F_k(n, \mu)$ is bounded above by:
\[ (\mu^4 + 6 \mu^3 +7 \mu^2 + \mu )^2 (\mu^4 + 3 \mu^3 + \mu)^4 (\mu + 1)^q \left( \frac{2^d \mu^2}{n}  \right)^q (1+ o(1))  \]
\end{claim}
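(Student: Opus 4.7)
The plan is to condition on the multiplicities $N_v$ of each literal $v$ appearing in $T$; since literals are placed via independent $\poiss(\mu)$ processes, the $N_v$ are i.i.d.\ $\poiss(\mu)$, and conditional on them the positions of the copies are independent and uniform on $[0,1]^d$. For a single clause $e=(v_i,v_j)$, a union bound over the $N_{v_i}N_{v_j}$ potential witness pairs gives $\Pr[e\text{ present}\mid N]\le (2^d/n)N_{v_i}N_{v_j}$.

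For the joint event I would use a first moment / witness-counting argument rather than appeal to conditional independence (two clauses sharing a literal give positively correlated presence events, so a naive product bound is not immediate). For each clause $e$, let $W_e$ be the number of close pairs witnessing $e$. Then by Markov,
\[
\Pr[\text{all clauses of } T \text{ present}\mid N] \le \E\!\left[\prod_{e\in T} W_e \,\Big|\, N\right],
\]
and expanding the product as a sum over ordered witness tuples (one (copy of $v_i$, copy of $v_j$) per clause) one checks that \emph{every} such tuple contributes a geometric factor at most $(2^d/n)^q$: because $T$ is a tree, identifying witness copies at a literal never creates a cycle in the close-pair constraint graph, so that graph is always a forest with exactly $q$ edges and the probability that all $q$ close-pair constraints hold simultaneously is at most $(2r)^{qd}=(2^d/n)^q$. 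The total number of witness tuples is exactly $\prod_v N_v^{\deg_T(v)}$, yielding
\[
\Pr[\text{all clauses of } T \text{ present}\mid N] \le \left(\frac{2^d}{n}\right)^q \prod_v N_v^{\deg_T(v)}(1+o(1)).
\]

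Taking expectations and using independence of the $N_v$, $\E\bigl[\prod_v N_v^{\deg_T(v)}\bigr]=\prod_v \E[N_v^{\deg_T(v)}]$. The Poisson moments $\E[N]=\mu$, $\E[N^2]=\mu^2+\mu$, $\E[N^3]=\mu^3+3\mu^2+\mu$, $\E[N^4]=\mu^4+6\mu^3+7\mu^2+\mu$ plug in directly: the (at most) two degree-$4$ literals contribute $(\mu^4+6\mu^3+7\mu^2+\mu)^2$ and the (at most) four degree-$3$ literals contribute the corresponding third-moment factor, exactly matching the explicit high-degree factors in the claim. For each of the remaining degree-$1$ or degree-$2$ literals I would use $\E[N^d]\le\mu(\mu+1)$ for $d\le 2$; together with $\sum_v \deg_T(v)=2q$, the $\mu$ factors assemble with $(2^d/n)^q$ into the $(2^d\mu^2/n)^q$ term, and the $(\mu+1)$ factors combine into $(\mu+1)^q$ with slack (only about $q-5$ low-degree literals appear).

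The principal obstacle is the bookkeeping on witness tuples, specifically the verification that every ``collapse'' pattern (in which two witness slots at the same literal happen to use the same Poisson copy) still yields a forest of close-pair constraints with $q$ edges, so the geometric factor is unchanged at $(2^d/n)^q$ while the combinatorial count remains $\prod_v N_v^{\deg_T(v)}$. A secondary technical point is absorbing the $(1+o(1))$ that accounts for the approximation $\Pr[|x-y|\le r]\sim 2^d r^d$ per constraint and for the contribution of atypically large multiplicities, both of which are negligible by Poisson concentration since $q=O(\log^2 n)$ caps the multiplicities that matter at $O(\log n)$ whp.
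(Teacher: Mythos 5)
Your overall strategy is the same as the paper's: condition on the Poisson multiplicities of the literals in $T$, bound the conditional probability by a product over the edges of $T$, and then take expectations to get a product of Poisson moments indexed by the degrees. Where you differ is in how the product bound is justified, and your version is genuinely better: the paper simply asserts that, given the multiplicities, the presences of the clauses of a tree are independent events (which is only an asymptotic statement and is never really argued), whereas your Markov bound $\Pr[\text{all present}\mid N]\le \E[\prod_e W_e\mid N]$ together with the observation that every witness tuple induces a \emph{forest} of close-pair constraints (a cycle would project to a closed walk in $T$ using each tree edge at most once, which is impossible) gives the clean, non-asymptotic inequality $\Pr[\text{all present}\mid N]\le (2^d/n)^q\prod_v N_v^{d_v}$. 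That step of your write-up is correct and is the right way to make the paper's argument rigorous.

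The gap is in the final assembly, and it is not merely bookkeeping. Your (correct) intermediate bound is $(2^d/n)^q\prod_v M_{d_v}$, where $M_r$ is the $r$-th Poisson moment. A tree with $q$ edges satisfying the degree restrictions has roughly $q$ literals of degree $2$, each contributing $M_2=\mu(\mu+1)$, so the true per-edge factor is $2^d\mu(\mu+1)/n$ up to a $\mu$-dependent constant --- not the claim's $2^d\mu^2(\mu+1)/n$. Your sentence ``the $\mu$ factors assemble with $(2^d/n)^q$ into the $(2^d\mu^2/n)^q$ term'' requires $\mu^{2q}$, but once you have spent the high-degree moments inside the explicit factors $M_4^2$ and $M_3^4$, the remaining degree-$\le 2$ literals supply only $\mu^{q+1-a-b}$ powers of $\mu$, not $\mu^{2q}$. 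Concretely, for a path of $q$ clauses your bound gives $\mu^{q+1}(\mu+1)^{q-1}(2^d/n)^q$, which for $\mu<1$ exceeds the claimed expression by a factor of order $\mu^{-q}$; since $q=\Theta(\log^2 n)$ this is super-polynomial, and the relevant regime $\mu>2^{-(d+1)/2}$ does include $\mu<1$. In fact the claim as literally stated appears to be false for $\mu<1$: the paper's own proof contains a compensating slip, writing the conditional per-clause probability as $2^d\mu v_iv_j/n$ when it should be $2^dv_iv_j/n$ (compare with the paper's single-clause probability $\sim 2^d\mu^2/n$ and its computation of $\Pr[(l_1,l_2),(l_1,l_3)\in F]$, both of which agree with the $\mu$-free version). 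So your derivation is the correct one; the honest conclusion of your argument is a bound of the form $C(\mu)\bigl(2^d\mu(\mu+1)/n\bigr)^q$, which coincides with the stated bound only when $\mu\ge 1$. (Separately, the claim's ``third moment'' $\mu^4+3\mu^3+\mu$ is a typo for $\mu^3+3\mu^2+\mu$; the formula you use is the correct one, so it does not ``exactly match'' the displayed factor.)
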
 

\begin{proof}
$(\mu^4 + 6 \mu^3 +7 \mu^2 + \mu)$ is the 4th moment of  Poisson random variable with mean $\mu$ and $\mu^4 + 3 \mu^3 +\mu$ is the 3rd moment.  

Let $v_1, \dots v_{q+1}$ be the number of appearances the literals $l_1, \dots l_{q+1}$ of $T$ in the cube.  Given the number of appearances of each literals, the presence of the clauses in a tree are independent events, so the conditional probability that all clauses in $T$ are present is 
\begin{equation*}
\sim \prod_{(l_i, l_j) \in T} \frac{2^d \mu v_i v_j}{n} =\left ( \frac{2^d \mu}{n} \right) ^{q} \prod_{i=1}^{q+1} v_i^{d_i} \,.
\end{equation*}
where $d_i$ is the degree of $l_i$ in $T$. Then taking the expectation over the independent Poisson processes for the $v_i$'s, we have that the probability is
\[ \sim   \left ( \frac{2^d \mu}{n} \right) ^{q} \prod_{i=1}^{q+1} M_{d_i} \]
where $M_r$ is the $r$th moment of a Poisson random variable of mean $\mu$.  The claim now follows from the degree restrictions of $T$.
\end{proof}

For us, the key point of the claim above is that the probability of $q$ edges that form a tree appearing in $F_k(n,\mu)$ is bounded by the product of the probabilities that each appear times $\alpha(\mu)^q$ where $\alpha $ is a constant that depends on $\mu$ but is independent of $n$.  We now modify Claim \ref{overlapclaim1} to count the number of literals that overlap between snakes $A$ and $B$.  

Let $p_{ijl}$ be the probability that $A$ and $B$ overlap on $i$ clauses, form $j$ cycles, and share $l$ literals in addition to those in the overlapping clauses. To prove Proposition \ref{varprop} for the $F_k(n, \mu) $ model, we need to show that, for any $i+j+l \ge 1$, 
\begin{equation}
\label{pijleq}
p_{ijl} \left( \frac{n}{2^d \mu^2} \right)^{i+j} \alpha(\mu)^l = o(s^{-3}) \,.
\end{equation}
where $2 s^3$ is an upper bound on the number of possible values for $i,j, l$.
  The arguments in Claim \ref{overlapclaim1} suffice in this model as well, for any overlapping literal that shares an underlying variable with an overlapping clause or an overlapping literal in a cycle: the bounds in the proof of Claim \ref{overlapclaim1} are strong enough to dominate another constant factor multiple.  All that remain are overlapping literals whose variables appear in neither overlapping clauses nor in cycles - these must come in pairs, and using the fact that the probability of $r$ variables overlapping is $\le (s^2/(n-s))^r$, we have factors that are all $O(\log^4 n \cdot n^{-1/2})$, which is enough for (\ref{pijleq}).

\section{Statement and Proof of Proposition \ref{ksatthm}}
\label{sec:firstmomSec}

For $k \ge 3$ we give bounds on the satisfiability threshold, showing in particular that the transition from almost certain satisfiability to almost certain unsatisfiability occurs when the number of clauses is linear in the number of variables:

\begin{prop}
\label{ksatthm}
For all $k \ge 3$ there exist functions $\overline \gam(k), \underline \gam(k), \overline \mu(k), \underline \mu(k)$ so that for any $\eps>0$,
\begin{enumerate}
\item For $\gam < \underline \gam(k) -\eps $, whp $F_k(n,\gam) \in SAT$. For $\gam> \overline \gam(k) +\eps$, whp $F_k(n,\gam) \notin SAT$.
\item For $\mu < \underline \mu(k) -\eps $, whp $F_k(n,\mu) \in SAT$. For $\mu > \overline \mu(k) +\eps$, whp $F_k(n,\mu) \notin SAT$.
\end{enumerate}
We can take $\underline \gam(k) =  2^{-(1+1/d)}$, $\underline \mu(k) = 2^{-(d+1)/2} $, $\overline \gam(k) = ( k-1)^{1/d}$, and $\overline \mu(k) = k + \ln 2$. In particular, all functions are independent of $n$ and so the threshold for satisfiability occurs with a linear number of clauses.
\end{prop}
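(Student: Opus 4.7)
The statement splits into lower bounds (satisfiability) and upper bounds (unsatisfiability), which I would handle separately.

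\textbf{Lower bounds via a $2$-SAT reduction.} The key geometric observation is that, in $\ell_\infty$ (or any norm whose balls are convex), if $k$ literals lie in a common ball of radius $r$ then any two of them are at pairwise distance at most $2r$ and hence also lie in a common ball of radius $r$. Consequently, for the \emph{same} point configuration, every $k$-clause of $F_k(n,\gamma)$ is accompanied by the $\binom{k}{2}$ corresponding 2-clauses in $F_2(n,\gamma)$, and any assignment satisfying $F_2$ satisfies each of those 2-clauses and therefore makes at least one literal in every $k$-clause true. Thus $\Pr[F_k(n,\gamma)\in\mathrm{SAT}] \ge \Pr[F_2(n,\gamma)\in\mathrm{SAT}]$, and the analogous inequality holds in the Poisson model. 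The lower bounds $\underline{\gamma}(k) = 2^{-(1+1/d)}$ and $\underline{\mu}(k) = 2^{-(d+1)/2}$ then follow directly from Theorem~\ref{2satthm}.

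\textbf{Upper bounds via the first moment.} Let $Y$ count satisfying assignments. Markov's inequality gives $\Pr[F \in \mathrm{SAT}] \le \E Y$, and by label symmetry of both models $\E Y = 2^n\,\Pr[x_0\ \text{satisfies}\ F]$ for any fixed assignment $x_0$. Under $x_0$ the $n$ ``bad'' (false) literals produce, in $F_k(n,\mu)$, a Poisson process of total intensity $n\mu$ on $[0,1]^d$ with uniformly random bad-literal labels, and in $F_k(n,\gamma)$ a uniform sample of $n$ points; $x_0$ satisfies $F$ exactly when no $k$-subset of distinct-label bad-literal points shares a common $r$-ball. I would partition $[0,1]^d$ into $\Theta(n)$ disjoint sub-cubes of side comparable to $r$, so that any $k$ points inside one sub-cube automatically share an $\ell_\infty$ $r$-ball; the cube counts are then independent Poisson (in $F_k(n,\mu)$) or approximately so after Poissonization (in $F_k(n,\gamma)$), and the distinct-label correction is $1-O(1/n)$ for fixed $k$. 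This yields
\[
\E Y \ \le\ 2^n(1-q)^{\Theta(n)}\ \le\ \exp\!\bigl(n(\log 2 - c(\mu,k,d)) + o(n)\bigr),
\]
for an explicit Poisson-tail quantity $c(\mu,k,d)$, and a direct calculation then delivers the stated closed-form constants $\overline{\mu}(k) = k + \ln 2$ and $\overline{\gamma}(k) = (k-1)^{1/d}$.

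\textbf{Main obstacle.} The 2-SAT reduction is quick; the substantive work is in the first moment upper bound. The core difficulty is the choice of sub-cube scale: one wants cubes small enough to sit inside a single $r$-ball (for independence of the counts) yet large enough that the Poisson probability $q$ of containing $\ge k$ points is not crushed. Striking this tradeoff correctly, together with the Poisson tail estimate at the boundary $\mu = k + \ln 2$ (resp.\ $\gamma^d = k-1$), produces the quoted constants. A minor but necessary technicality is promoting ``$\ge k$ bad-literal points in a cube'' to ``$\ge k$ points with $k$ distinct variable labels''; a union bound over the $O(k^2)$ label-collision events per cube absorbs this into $o(n)$ uniformly across the $\Theta(n)$ sub-cubes.
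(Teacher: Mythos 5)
Your lower bounds (the $2$-SAT reduction combined with Theorem~\ref{2satthm}) and your upper bound for $F_k(n,\mu)$ (tile into $n$ boxes of side $n^{-1/d}$, Poisson$(\mu)$ counts, median-type tail estimate giving $\overline\mu(k)=k+\ln 2$, with the label-collision correction of order $1/n$ per box) are exactly the paper's arguments and are fine.

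The gap is in the upper bound for $F_k(n,\gamma)$. You propose to Poissonize the $n$ uniform false-literal points and run the same Poisson-tail first-moment computation, claiming this ``delivers'' $\overline\gamma(k)=(k-1)^{1/d}$. It does not. With boxes of side $\gamma n^{-1/d}$ you have $\approx n/\gamma^d$ boxes, each with a Poisson$(\gamma^d)$ count, so your bound reads $\E Y\le 2^n\bigl(1-\Pr[\poiss(\gamma^d)\ge k]\bigr)^{n/\gamma^d}$, which is $o(1)$ only if $-\log\bigl(1-\Pr[\poiss(\gamma^d)\ge k]\bigr)>\gamma^d\log 2$. At $\gamma^d=k-1$ this fails badly: for $k=3$, $\Pr[\poiss(2)\ge 3]\approx 0.32$, so the left side is about $0.39$ while the right side is $2\log 2\approx 1.39$; more generally the right side exceeds $1$ for all $k\ge 3$ while the left side is the logarithm of a probability's reciprocal divided into a bounded tail, so no Poisson-tail estimate can close this. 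The paper's argument for this model is not a tail bound at all but a deterministic pigeonhole: under any assignment there are \emph{exactly} $n$ false-literal points (one per variable, all distinct), and for $\gamma^d>k-1$ the tiling has strictly fewer than $n/(k-1)$ boxes, so some box is forced to contain $\ge k$ of them and the assignment is violated with probability $1$ (for large $n$). Poissonization destroys precisely this forcing, since the total point count then fluctuates. You need to keep the fixed-$n$ structure and invoke pigeonhole to reach the stated constant $(k-1)^{1/d}$.
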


\paragraph{$F_k(n,\gam)$}
The lower bound follows from the lower bound in Theorem \ref{2satthm}.  For the same set of points in the cube, form both the corresponding $2$-SAT formula and the $k$-SAT formula.  For each $k$-clause the $2$-SAT formula will include each of the $\binom k 2$ subclauses of length $2$.  If there is a satisfying assignment to the $2$-SAT formula, the same assignment will satisfy the $k$-SAT formula.    

For an upper bound, we use the first-moment method and show that the expected number of satisfying assignments is $o(1)$.  This will follow if we show that the probability that the all T assignment is satisfying is $\le q^n$ for some $q < 1/2$ independent of $n$.  The all T assignment is satisfying if and only if no $k$-clause of all negative literals is present, so we need an upper bound on the probability that  $n$ points uniformly distributed on $[0,1]^d$ has no set of $k$ points in a ball of radius $\gamma n^{-1/d}$.

Set $\gam > (k-1)^{1/d}+\eps$. Tile $[0,1]^d$ by $(\lceil n^{1/d}/ \gam \rceil )^d$ boxes of side length $\gam n^{1/d}$ (with boxes along the boundary possibly smaller).  For large enough $n$ (depending on $\eps$), the number of boxes is strictly less than $ n /(k-1)$.  By the pigeonhole principle there must be a box with at least $k$ points, and so the probability of no $k$-cliques is $0$.    This is true for any set of $n$ literals, and so with probability $1$ there is no satisfying assignment.

\paragraph{$F_k(n,\mu)$}

The lower bound again follows from the $k=2$ case and Theorem \ref{2satthm}.  For the upper bound, tile $[0,1]^d$ by $n$ boxes of side length $ n^{-1/d}$.  The probability that there is no $k$-clause of negative literals is bounded by the probability that none of these boxes contain $k$ negative literals.  The nodes in the different boxes are independent, so we need to show that for large enough $\mu$, the probability there are fewer than $k$ negative literals in a single cube of side length $n^{-1/d}$ is strictly less than $1/2$.  The number of negative literals in a single such cube has distribution $\poiss(\mu)$. The median of a Poisson with mean $\lam$ is at least $\lam - \ln 2$, so if we pick $\overline \mu(k) > k + \ln 2$, then $\Pr[ \poiss(\mu) < k] < 1/2$ and via a first-moment argument whp $F_k(n, \mu)$ is unsatisfiable.

\section*{Acknowledgements}
The authors would like to thank Alfredo Hubard for many interesting conversations on this topic.

\bibliographystyle{plain}	
\bibliography{geomSAT}

\end{document}